\newtheorem{theorem}{Theorem}[section]
\newtheorem{lemma}[theorem]{Lemma}
\newtheorem{proposition}[theorem]{Proposition}
\theoremstyle{remark}
\newtheorem{definition}[theorem]{Definition}
\newtheorem{remark}[theorem]{Remark}
\newtheorem{assumption}[theorem]{Assumption}
\newcounter{remarkCounter}
\newcommand{\pd}[2]{\frac{\partial #1}{\partial #2}}
\newlength{\setBracketHeight}
\newcommand{\LieDer}{\ensuremath{\EuScript L}}
\newcommand{\hook}{\mathbin{\! \hbox{\vrule height0.3pt width5pt depth 0.2pt \vrule height5pt width0.4pt depth 0.2pt}}}
\newcommand{\R}[1]{\ensuremath{\mathbb{R}^{#1}}}
\newcommand{\C}[1]{\ensuremath{\mathbb{C}^{#1}}}
\newcommand{\vol}[1]{\ensuremath{\mathsf{vol}_{#1}}}
\newcommand{\nForms}[2]{\ensuremath{\Omega^{#1} \! \left({#2}\right)}}
\newcommand{\st}[1]{\ensuremath{\ast_{ {}_{\! #1}}}}
\renewcommand{\Re}[1]{\ensuremath{\operatorname{Re} #1}}
\renewcommand{\Im}[1]{\ensuremath{\operatorname{Im} #1}}
\newcommand{\SU}[1]{\ensuremath{\mathrm{SU}\! \left(#1\right)}}
\newcommand{\G}{\ensuremath{\mathrm{G}_2}}
\newcommand{\SP}{\ensuremath{\mathrm{Spin}(7)}}
\newcommand{\ph}{\ensuremath{\varphi}}
\newcommand{\ps}{\ensuremath{\psi}}
\numberwithin{equation}{section}
\numberwithin{table}{section}
\numberwithin{figure}{section}
\begin{document}

\title{Soliton solutions for the Laplacian coflow \\ of some $\G$-structures with symmetry}

\author{Spiro Karigiannis\footnote{The research of the first author is partially supported by a Discovery Grant from the Natural Sciences and Engineering Research Council of Canada.} \\ {\it Department of Pure Mathematics, University of Waterloo} \\ \tt{karigiannis@math.uwaterloo.ca} \and Benjamin McKay\footnote{The research of the second author is supported by Grant No. MATF636 of Science Foundation Ireland.} \\ {\it University College Cork, National University of Ireland} \\ \tt{B.McKay@ucc.ie} \and Mao-Pei Tsui \\ {\it Department of Mathematics, University of Toledo} \\ \tt{Mao-Pei.Tsui@utoledo.edu} }

\maketitle

\begin{abstract}
We consider the Laplacian ``co-flow'' of $\G$-structures: $\pd{}{t} \ps = - \Delta_d \ps$ where $\ps$ is the dual $4$-form of a $\G$-structure $\ph$ and $\Delta_d$ is the Hodge Laplacian on forms. Assuming short-time existence and uniqueness, this flow preserves the condition of the $\G$-structure being coclosed ($d\ps =0$). We study this flow for two explicit examples of coclosed $\G$-structures with symmetry. These are given by warped products of an interval or a circle with a compact $6$-manifold $N$ which is taken to be either a nearly K\"ahler manifold or a Calabi-Yau manifold. In both cases, we derive the flow equations and also the equations for soliton solutions. In the Calabi-Yau case, we find all the soliton solutions explicitly. In the nearly K\"ahler case, we find several special soliton solutions, and reduce the general problem to a single \emph{third order} highly nonlinear ordinary differential equation.
\end{abstract}

\tableofcontents

\section{Introduction}

Flows of $\G$-structures were first considered by Robert Bryant in~\cite{Bryant:2006}. In particular, Bryant considered the Laplacian flow of $\G$-structures: $\pd{}{t} \ph = \Delta_d \ph$, where $\ph$ is a non-degenerate $3$-form defining a $\G$-structure, and $\Delta_d$ is the Hodge Laplacian on forms. In the case when $\ph$ is closed, this condition is preserved under the flow. Using an appropriate choice of inner product on the space of exact $3$-forms, one can also show that this flow is the \emph{gradient flow} for the volume functional on the space of torsion-free $\G$-structures which was introduced by Hitchin in the arXiv version of~\cite{Hitchin:2000}.

\begin{remark}
Note that since the Hodge Laplacian $\Delta_d$ is equal to \emph{minus} the rough Laplacian $\nabla^* \nabla$ plus lower order terms (by the Weitzenb\"ock formula), it can be argued that it is more natural to consider $\pd{}{t} \ph = - \Delta_d \ph$ in order for this flow to be qualitatively like a heat equation. However, for \emph{closed} $\G$-structures, one can show that $\Delta_d \ph$ actually only contains first derivatives of $\ph$, so that $\Delta_d \ph$ and $-\Delta_d \ph$ are the same, up to lower order terms. Therefore in this case only, both flows are heat-like. The choice $+ \Delta_d \ph$ has the advantage that it is the gradient flow for the Hitchin functional, so it does increase the volume along the flow, and the torsion-free $\G$-structures are indeed local maxima of the Hitchin volume functional. The fact that $\Delta_d \ph$ contains only first derivatives of $\ph$ when $\ph$ is closed can be shown using the general machinery for flows of $\G$-structures in~\cite{Karigiannis:2009}.
\end{remark}

Since this fundamental work by Bryant, the first author has developed several formulas for general flows of $\G$-structures in~\cite{Karigiannis:2009}. More recently, there has been work by Xu--Ye~\cite{Xu:Ye:2010}, Weiss--Witt~\cite{Weiss:Witt:2010} and Bryant--Xu~\cite{Bryant:Xu:2011} on the short-time existence and uniqueness of solutions for the Laplacian flow $\pd{}{t} \ph = \Delta_d \ph$ for closed $\G$-structures.

In this paper we consider the Laplacian ``coflow'' of $\G$-structures, by which we mean the Laplacian flow of the dual $4$-form $\ps = \st{\ph} \ph$. That is, $\pd{}{t} \ps = - \Delta_d \ps$. Since this flow cannot be related to the Hitchin volume functional in any obvious way, and because we do not restrict ourselves to closed $\G$-structures, but rather to coclosed $\G$-structures, we include a minus sign in front of our Hodge Laplacian to make the equation heat-like. If we assume short-time existence and uniqueness, then this flow preserves the coclosed ($d\ps=0$) condition, as we discuss in Section~\ref{coflowsec}. The reason we consider this flow is because there exists a general ansatz for a cohomogeneity one $\G$-structure on $M^7 = N^6 \times L^1$ which is a \emph{coclosed} $\G$-structure. Here we take the $1$-manifold $L^1$ to be either $\R{}$ or $S^1$, and the compact $6$-manifold $N^6$ is taken to be a nearly K\"ahler $6$-manifold or a Calabi-Yau $3$-fold.

In Section~\ref{reviewsec} we review various facts about $\G$-structures and their torsion forms. In Section~\ref{su3sec} we discuss $\SU{3}$-structures on a $6$-manifold $N^6$, and focus on the special cases of Calabi-Yau and nearly K\"ahler structures. We also develop some formulas we need later. Section~\ref{coflowsec} discusses certain properties of the Laplacian coflow, including its associated soliton solutions. Finally in Sections~\ref{CYsec} and~\ref{NKsec} we explicitly derive the evolution equations and soliton equations (and discuss their solutions) when $N^6$ is Calabi-Yau or nearly K\"ahler, respectively. In particular, we find all the soliton solutions in the Calabi-Yau case. In the nearly K\"ahler case, we find several special solutions to the soliton equations, including the interesting case of a \emph{sine-cone} metric over a nearly K\"ahler manifold, which corresponds to a non-torsion free $\G$-structure that is an eigenform of its own Laplacian.

Cohomogeneity one solitons for the Ricci flow have been extensively studied. Some references (this list is not exhaustive) include~\cite{Dancer/Wang:2000, Dancer/Wang:2005, Petersen/Wylie}.

{\em Note:} Throughout this paper, we use $| \cdot |$ and $\langle \cdot, \cdot \rangle$ to denote the pointwise norm and inner product on differential forms and $|| \cdot ||$ and $\langle \langle \cdot, \cdot \rangle \rangle$ to denote the $L^2$ norm and inner product on forms (the integral of the corresponding pointwise quantity over the manifold.)

{\bf Acknowledgements.} The first author would like to acknowledge useful discussions with Christopher Lin and Feng Xu. The second author would like to acknowledge useful discussions with Robert L.\ Bryant. The third author would like to thank the Department of Pure Mathematics at the University of Waterloo, which he was visiting when parts of this work were completed. The authors also thank the referee for useful suggestions and for alerting us to some important references that we had overlooked.

\section{Review of $\G$-structures and their torsion} \label{reviewsec}

We begin by recalling the definition of a \(\G\)-structure.

\begin{definition} \label{g2structuredefn}
A \(3\)-form \(\ph\) on a \(7\)-manifold \(M^7\) is called \emph{nondegenerate} if for any nonzero  \( X \in T_p M\),
\[
0 \ne \left(X \hook \ph \right) \wedge \left(X \hook \ph \right) \wedge \ph.
\]
A smooth nondegenerate \(3\)-form is also called a \emph{\(\G\)-structure}. If \(\ph\) is a \(\G\)-structure, then there is a unique metric \(g=g_{\ph}\) and orientation such that if \(\vol{} = \vol{\ph}\) is the volume form associated to that metric and orientation, then for any point \(p \in M\) and any vectors \(X, Y \in T_p M\), we have
\[
-\frac{1}{6}\left(X \hook \ph \right) \wedge \left(Y \hook \ph \right) \wedge \ph = g(X,Y) \vol{\ph}.
\]
See Bryant \cite{Bryant:2006} for a proof. Note that we are using the opposite orientation of \cite{Bryant:1987b,Bryant:2006}. Let \(\st{\ph}\) be the Hodge star operator of \(g_{\ph}\) with the orientation induced by \(\ph\). We will often write \(\st{\ph}\) as \(\st{7}\) to indicate the dimension of the manifold \(M^7\). We will always write \(\ps\) to mean the dual \(4\)-form \(\ps=\st{\ph} \ph\).
\end{definition}

There are various natural conditions on \(\G\)-structures that one can consider.

\begin{definition} \label{g2structuresconditionsdefn}
A \(\G\)-structure \(\ph\) is called \emph{closed} if \(d \ph=0,\) \emph{coclosed} if \(d \ps=0,\) and torsion-free if \(\Delta_d \ph = 0\) (or equivalently if \(\Delta_d \ps=0\)).
\end{definition}

The space of forms on $M^7$ decomposes into irreducible subspaces under the action of \(\G\), and this allows us to define the \emph{torsion forms} of a \(\G\)-structure. In particular we have $\Lambda^4 = \Lambda^4_1 \oplus \Lambda^4_7 \oplus \Lambda^4_{27}$ and $\Lambda^5 = \Lambda^5_7 \oplus \Lambda^5_{14}$. Precise descriptions of these subspaces, which we will not require here, can be found in~\cite{Bryant:2006, Joyce:2000, Karigiannis:2009}.

\begin{definition} \label{torsiondefn}
If \(\ph\) is a \(\G\)-structure on a \(7\)-manifold, with associated \(4\)-form \(\ps\), then there are unique forms \(\tau_0, \tau_1, \tau_2, \tau_3,\) called the \emph{torsion forms} of the \(\G\)-structure, where \(\tau_k\) is a \(k\)-form, such that
\begin{equation} \label{torsionformseq}
\begin{aligned}
d \ph &= \tau_0\ps + 3 \tau_1 \wedge \ph + \st{\ph} \tau_3, \\
d \ps &= 4 \tau_1 \wedge \ps + \st{\ph} \tau_2.
\end{aligned}
\end{equation}
We can recover the torsion forms using the following identities:
\begin{align} \label{tauzeroeq}
\tau_0 &= \frac{1}{7} \st{7} \left( \ph \wedge d \ph \right) \\ \label{tauoneeq}
\tau_1 &= \frac{1}{12} \st{7} \left( \ph \wedge \st{7} d \ph \right) = \frac{1}{12} \st{7} \left( \ps \wedge \st{7} d \ps \right)
\end{align}
\end{definition}
See~\cite{Bryant:2006} or~\cite{Karigiannis:2009} for a more detailed discussion about the torsion forms, including the derivation of the above equations. The torsion forms were first considered by Fern\`andez--Gray~\cite{Fernandez:Gray:1982} and are also discussed in detail in~\cite{Cabrera:1996} and~\cite{Friedrich:Ivanov:2002}, for example. When the four torsion forms vanish (equivalently when $\ph$ is closed and coclosed) the $\G$-structure is called torsion-free and it can be shown that the Riemannian holonomy of the metric $g_{\ph}$ is contained in $\G$, and that $g_{\ph}$ is Ricci-flat.

\section{$\mathrm{SU}(3)$-structures and their associated $\G$-structures} \label{su3sec}

Let $N^6$ be a smooth $6$-manifold. An $\SU{3}$-structure on $N^6$ is a reduction of the structure group from $\mathrm{GL}(6, \R{})$ to $\SU{3}$. Such manifolds come equipped with an almost complex structure $J$, a Riemannian metric $g$ with respect to which $J$ is orthogonal, and a particular choice of nowhere vanishing smooth complex-valued $3$-form $\Omega$ of type $(3,0)$. The metric and the almost complex structure together determine the K\"ahler form $\omega(X,Y) = g(JX, Y)$, which is a real $2$-form of type $(1,1)$. At each point on $N$, the magnitude of $\Omega$ can be fixed by the requirement that these structures are related by the following equation:
\begin{equation} \label{SU3relationeq}
\vol{N} = \frac{\omega^3}{3!} = \frac{i}{8}  \Omega \wedge \bar{\Omega} = \frac{1}{4} \Re{(\Omega)} \wedge \Im{(\Omega)}.
\end{equation}
Note that if we change $\Omega$ to $e^{i\theta} \Omega$, for some phase function $e^{i\theta}$ which can vary on $N$, then we get the same $\mathrm{U}(3)$-structure but a different $\SU{3}$-structure.
\begin{remark} \label{SU3coframermk}
For a manifold \(N^6\) equipped with an \(\SU{3}\)-structure, near each point of \(N^6\) we can find a local unitary coframe of complex-valued $1$-forms \((\xi_1, \xi_2, \xi_3)\) for which
\begin{align*}
\omega &= \frac{i}{2} \sum_p \xi_p \wedge \bar \xi_p, \\
\Omega &= \xi_1 \wedge \xi_2 \wedge \xi_3.
\end{align*}
It is clear that these forms are independent of the choice of such local unitary coframe, as long as it maintains the same ``complex orientation.'' This means that the two frames can only differ by an element of $\SU{3}$ at each point on $N$.
\end{remark}

We will write the Hodge star operator of \(N\) as \(\st{6}\), the metric as \(g_{6}\) and the volume form as \(\vol{6}\). It is then easy to check the following identities (which will be employed often in later sections):
\begin{equation} \label{SU3relationseq}
\begin{aligned}
\st{6}^2 &= (-1)^k \text{ on } \nForms{k}{N}, & \st{6} \Omega &= -i\Omega, & \st{6} \bar{\Omega} &= i\bar{\Omega} \\ \st{6} \omega &= \frac{\omega^2}{2}, & \st{6} \frac{\omega^2}{2} &= \omega, &  \left( x \hook \Omega\right) \wedge \omega &= \Omega \wedge \left( x \hook \omega\right).
\end{aligned}
\end{equation}

The importance of $\SU{3}$-structures for our purposes is that they naturally induce $\G$-structures on $M^7 = N^6 \times L^1$, where $L^1$ can be $\R{}$ or $S^1$. Let $r$ be a local coordinate on $L^1$. Then the $3$-form $\ph$ defined by $\ph = \Re(\Omega) - dr \wedge \omega$ is a $\G$-structure on $M^7$, inducing the product metric $g_7 = dr^2 + g_6$ and the dual $4$-form $\ps = -dr \wedge \Im{\Omega} - \frac{\omega^2}{2}$.
See~\cite{Karigiannis:Notes} for a detailed discussion of this relationship, as well as an explanation of the different sign conventions for $\G$-structures. The relationships between $\SU{3}$-structures and $\G$-structures are also discussed in~\cite{Chiossi:Salamon:2001} and~\cite{Cabrera:2006}.

\begin{definition} \label{symmetryg2structuredefn}
We can define a more general $\G$-structure on $M^7$ which is cohomogeneity one with respect to the $\SU{3}$ action. Let $F(r)$ be a smooth, nowhere vanishing complex-valued function on $L^1$, and let $G(r)$ to be a smooth, everywhere positive function on $L^1$. Then
\begin{equation} \label{symmetrypheq}
\ph = \Re{(F^3 \Omega)} - G {|F|}^2 dr \wedge \omega
\end{equation}
is a $\G$-structure on $M^7$, with induced metric
\begin{equation} \label{symmetrymetriceq}
g_7 = G^2 dr^2 + {|F|}^2 g_6,
\end{equation}
associated volume form
\begin{equation} \label{symmetryvoleq}
\vol{7} = G {|F|}^6 dr \wedge \vol{6},
\end{equation}
and dual $4$-form
\begin{equation} \label{symmetrypseq}
\ps = - G dr \wedge \Im{(F^3 \Omega)} - {|F|}^4 \frac{\omega^2}{2}.
\end{equation}
With regards to the $\SU{3}$ local unitary coframe on $N$ described in Remark~\ref{SU3coframermk}, this simply corresponds to choosing $\{F\Re{\xi_1}, F\Re{\xi_2}, F\Re{\xi_3}, F\Im{\xi_1}, F\Im{\xi_2}, F\Im{\xi_3}, Gdr\}$ to be an orthonormal $\G$-adapted coframe for $M^7$. 
\end{definition}

\begin{remark} \label{Gremark}
We remark that the function $G(r)$ can always be set equal to $1$ by defining a new local coordinate to be $\tilde r = \int_0^r G(s) ds$, so $d \tilde r = G(r) dr$. \emph{However}, when we are considering a flow of $\G$-structures $\ph(t)$, it will be convenient to include this factor of $G(r)$, because then $G(r)$ and thus the change of variables $\tilde r = \tilde r(r)$ will in general also be $t$-dependent. This will become clear in Section~\ref{coflowsec}.
\end{remark}

\subsection{Calabi-Yau threefolds} \label{CYintrosec}

When both the K\"ahler form $\omega$ and the nonvanishing $(3,0)$ form $\Omega$ are \emph{parallel} with respect to the Levi-Civita connection $\nabla$ of the metric $g$, then $(N^6, g, \omega, \Omega)$ is called a \emph{Calabi-Yau threefold}. In particular the forms $\omega$ and $\Omega$ are both closed: $d\omega=0$ and $d\Omega = 0$. See~\cite{Joyce:2000} for more about the differential geometry of Calabi-Yau manifolds. In this case, the ansatz given by equations~\eqref{symmetrypheq} and~\eqref{symmetrypseq} for the $\G$-structure on $N^6 \times L^1$ will be torsion-free (closed and coclosed) if and only if
\begin{align*}
d \left( \frac{1}{2} F^3 \Omega + \frac{1}{2} \bar F^3 \bar \Omega - G F \bar F dr \wedge \omega \right) \, & = \, \frac{3}{2} \left( F^2 F' dr \wedge \Omega + \bar F^2 \bar F' dr \wedge \bar \Omega \right) \, = 0, \\ d \left( -\frac{1}{2i} G F^3 dr \wedge \Omega + \frac{1}{2i} G \bar F^3 dr \wedge \bar \Omega - F^2 \bar F^2 \frac{\omega^2}{2} \right) \, & = \, -2 \left( F F' \bar F^2 + F^2 \bar F \bar F' \right) dr \wedge\frac{\omega^2}{2} \, = 0. \\
\end{align*}
By comparing types, these equations are satisfied if and only if $F' = 0$. Hence $F$ must be constant for the $\G$-structure to be torsion-free. By remark~\ref{Gremark}, in the time-independent case we can always assume $G=1$, and by rescaling the $\SU{3}$-structure on $N$ we can assume that $F=1$ also. Hence $M^7$ is then metrically a product of the Calabi-Yau $3$-fold and the standard flat metric on $L^1$.

\subsection{Nearly K\"ahler 6-manifolds} \label{NKintrosec}

Another interesting $\SU{3}$-structure that is related to $\G$-geometry is that of a \emph{nearly K\"ahler $6$-manifold}. In this case, the forms $\omega$ and $\Omega$ satisfy the following system of coupled equations:
\begin{equation} \label{NKequations}
\begin{aligned}
d \omega & = - 3 \Re{(\Omega)}, & & & d \Re{(\Omega)} & = 0, \\
d \Im{(\Omega)} & = 4 \, \frac{\omega^2}{2}, & & & d \, \frac{\omega^2}{2} & = 0.
\end{aligned}
\end{equation}
Of course the second column of equations in~\eqref{NKequations} follow immediately from the first column, but we prefer to list them all together as we will require them all for computations in Section~\ref{torsionsec}.

An excellent survey of nearly K\"ahler manifolds is~\cite{Reyes:Salamon:1999}. We remark that, other than the standard round $S^6$, only three other examples of compact nearly K\"ahler $6$-manifolds are known, and these are all homogeneous spaces. The fact that these are the only compact homogeneous examples that can exist was proved by Butruille~\cite{Butruille:2005}. It is expected, however, that there should exist many non-homogeneous compact examples. The case of cohomogeneity-one complete nearly K\"ahler manifolds has been studied by Podest\`a--Spiro in~\cite{Podesta:Spiro:1:2010} and~\cite{Podesta:Spiro:2:2010}.

For the purposes of the present paper, we will only need to use the equations~\eqref{NKequations} describing a nearly K\"ahler $6$-manifold, in addition to the standard relations of an $\SU{3}$-structure from equations~\eqref{SU3relationeq} and~\eqref{SU3relationseq}. In this case, the ansatz~\eqref{symmetrypheq} and~\eqref{symmetrypseq} for the $\G$-structure on $N^6 \times L^1$ will be torsion-free (closed and coclosed) if and only if
\begin{align*}
d \ph \, & = \, \frac{3}{2} \left( F^2 F' dr \wedge \Omega + \bar F^2 \bar F' dr \wedge \bar \Omega \right) + \frac{1}{2} F^3 (4 i) \frac{\omega^2}{2} + \frac{1}{2} \bar F^3 (-4 i) \frac{\omega^2}{2} + G F \bar F dr \wedge \left( -\frac{3}{2} \Omega - \frac{3}{2} \bar \Omega \right) \\ & = \, \frac{3}{2} \left( F^2 F' - G F \bar F \right) dr \wedge \Omega + \frac{3}{2} \left( \bar F^2 \bar F' - G F \bar F \right) dr \wedge \bar \Omega + 2i (F^3 - \bar F^3) \frac{\omega^2}{2} \, = \, 0,
\end{align*}
and
\begin{align*}
d \ps \, & = \, -2 \left( F F' \bar F^2 + F^2 \bar F \bar F' \right) dr \wedge\frac{\omega^2}{2} + \frac{1}{2i} G F^3 dr \wedge (4i) \frac{\omega^2}{2} - \frac{1}{2i} G \bar F^3 dr \wedge (-4i) \frac{\omega^2}{2} \\ & = \, \left( 2G(F^3 + \bar F^3 ) - 2(F^2 \bar F \bar F' + \bar F^2 F F') \right) dr \wedge \frac{\omega^2}{2} \, = \, 0.
\end{align*}
Again assuming that $G=1$, it is easy to check that the solution to this system of equations is $F(r) = r$, yielding the metric
\begin{equation*}
g_7 = dr^2 + r^2 g_6
\end{equation*}
which is a \emph{metric cone} over the space $N^6$. Here we need to take $L^1 = (0, \infty)$. In fact, one can also \emph{define} nearly K\"ahler $6$-manifolds to be exactly those spaces for which the Riemannian cone over them has holonomy contained in $\G$. See B\"ar~\cite{Bar:1993} for details.

\begin{remark}
See also Cleyton--Swann~\cite{Cleyton:Swann:2002} for another application of $\SU{3}$-structures to cohomogeneity one $\G$-structures.
\end{remark}

\subsection{Some invariant formulas on $M^7 = N^6 \times L^1$} \label{formulassec}

In this section we collect together some formulas involving the Hodge star operators $\ast_6$ and $\ast_7$ on $N^6$ and $M^7$, respectively, which we will use in both the Calabi-Yau and the nearly K\"ahler cases to study the Laplacian coflow. We also discuss the Laplacian and gradient for functions on $M^7$ which depend only on the coordinate $r$ on $L^1$, which we will need later to express the evolution and soliton equations in an invariant form.

We consider the ansatz~\eqref{symmetrypheq} for a cohomogeneity one $\G$-structure on $M^7$. To simplify the calculations somewhat, we will sometimes write
\begin{equation*}
F = h e^{i \theta}
\end{equation*}
for some smooth \emph{real valued} functions $h$ and $\theta$ on $L^1$. Hence we can write equations~\eqref{symmetrypheq} and~\eqref{symmetrypseq} as
\begin{equation} \label{phpseqs}
\begin{aligned}
\ph & = \frac{F^3}{2} \Omega + \frac{\bar F^3}{2} \bar \Omega - G h^2 dr \wedge \omega, \\
\ps & = \frac{i G F^3}{2} dr \wedge \Omega - \frac{i G \bar F^3}{2} dr \wedge \bar \Omega - h^4 \frac{\omega^2}{2}.
\end{aligned}
\end{equation}
and the metric and volume form as
\begin{equation} \label{metricvoleq}
g_7 = G^2 dr^2 + h^2 g_6, \qquad \qquad \vol{7} = G h^6 dr \wedge \vol{6}.
\end{equation}
Using~\eqref{metricvoleq} for the metric and the volume form on $M^7$, it is easy to see that if $\alpha$ is any $k$-form on $N^6$, then we have
\begin{equation} \label{symmetrystareq}
\begin{aligned}
\st{7} \alpha & = \, (-1)^k h^{6-2k} G \, dr \wedge \st{6} \alpha, \\
\st{7} \left( dr \wedge \alpha \right) & = \, h^{6-2k} G^{-1} \st{6} \alpha.
\end{aligned}
\end{equation}
Using these equations and~\eqref{SU3relationseq}, we find that
\begin{equation} \label{symmetrystareq2}
\left.
\begin{aligned}
\st{7} \omega &= h^2 G dr \wedge \frac{\omega^2}{2}, & & & \st{7} \left(dr \wedge \omega \right) & = h^2 G^{-1} \frac{\omega^2}{2}, \\
\st{7} \Omega &= iG dr \wedge \Omega, & & & \st{7} \left(dr \wedge \Omega\right) & = -iG^{-1} \Omega, \\
\st{7} \left( \frac{\omega^2}{2} \right) & = h^{-2} G \, dr \wedge \omega, & & &  \st{7} \left(dr \wedge \frac{\omega^2}{2} \right) & = h^{-2} G^{-1} \omega. \\
\end{aligned}
\qquad \qquad \right\}
\end{equation}

\begin{remark}
Throughout this paper, we will always use a prime $'$ to denote differentiation with respect to the coordinate $r$ on $L^1$.
\end{remark}

Suppose that $f = f(r)$ is a function depending only on the coordinate $r$ on $L^1$. Then using~\eqref{symmetrystareq} we can compute that its Hodge Laplacian $\Delta_d f$ is given by
\begin{align*}
\Delta_d f & = \, d^* d f \, = \, - \st{7} \! d \st{7} \! f' dr \, = \, - \st{7} \! d ( f' \st{7} \! dr ) \, = \, - \st{7} \! d \left( \frac{f' h^6}{G} \vol{6} \right) \\ & = \, - \st{7} \! \left( \left( \frac{f' h^6}{G} \right)' dr \wedge \vol{6} \right) \, = \, - \frac{1}{G h^6} \left( \frac{f' h^6}{G} \right)'
\end{align*}
\begin{remark}
We will use the symbol $\Delta$ (without the $d$ subscript) to denote the \emph{rough Laplacian} $\nabla^* \nabla$, which, on functions, differs from $\Delta_d$ by a sign.
\end{remark}
Hence the above equation gives
\begin{equation} \label{roughlaplacianeq}
\Delta f \, = \,  \frac{1}{G h^6} \left( \frac{f' h^6}{G} \right)' \, = \, \frac{f''}{G^2} + \frac{6 h' f'}{h G^2} - \frac{f' G'}{G^3}.
\end{equation}
We also note by~\eqref{metricvoleq}, if $f = f(r)$ and $\rho = \rho(r)$, and $\nabla$ denotes the \emph{gradient} with respect to $g_7$, then we have that
\begin{equation} \label{gradienteq}
\langle \nabla f , \nabla \rho \rangle \, = \, \langle df, d\rho \rangle \, = \, f' \rho' \langle dr, dr \rangle \, = \, \frac{f' \rho'}{G^2}, \qquad \qquad |\nabla f|^2 \, = \, \frac{(f')^2}{G^2}.
\end{equation}

\subsection{The torsion forms} \label{torsionsec}

In this section we compute the four torsion forms $\tau_0$, $\tau_1$, $\tau_2$, and $\tau_3$ that we defined in Section~\ref{reviewsec} for our cohomogeneity one $\G$-structure on $N^6 \times L^1$, in the two cases where $N^6$ is either Calabi-Yau or nearly K\"ahler. Differentiating the forms in~\eqref{phpseqs} gives
\begin{align*}
d\ph & = \frac{(F^3)'}{2} dr \wedge \Omega + \frac{(\bar F^3)'}{2} dr \wedge \bar \Omega + G h^2 dr \wedge d \omega + \frac{F^3}{2} d \Omega + \frac{\bar F^3}{2} d \bar \Omega, \\
d\ps & = -\frac{i G F^3}{2} dr \wedge d\Omega + \frac{i G \bar F^3}{2} dr \wedge d\bar \Omega - (h^4)' dr \wedge \frac{\omega^2}{2} - h^4 d\left( \frac{\omega^2}{2} \right).
\end{align*}
In the Calabi-Yau case, we have $d\omega = 0$ and $d\Omega = 0$, while in the nearly K\"ahler case, equations~\eqref{NKequations} say
\begin{equation} \label{NKequationscomplex}
d\omega = -\frac{3}{2} \Omega - \frac{3}{2} \bar \Omega, \qquad \qquad d \left(\frac{\omega^2}{2}\right) = 0, \qquad \qquad d \Omega = 4i \frac{\omega^2}{2}.
\end{equation}
Therefore,
\begin{equation} \label{firstderivativeeqs}
\left.
\begin{aligned}
& \text{when $N^6$ is Calabi-Yau:} \\ & \qquad d\ph = \frac{(F^3)'}{2} dr \wedge \Omega + \frac{(\bar F^3)'}{2} dr \wedge \bar \Omega, \\ & \qquad d\ps = - (h^4)' dr \wedge \frac{\omega^2}{2}; \\ & \text{when $N^6$ is nearly K\"ahler: } \\ & \qquad d\ph = \left( \frac{(F^3)'}{2} - \frac{3}{2} G h^2 \right) dr \wedge \Omega + \left( \frac{(\bar F^3)'}{2} - \frac{3}{2} G h^2 \right) dr \wedge \bar \Omega + 2 i (F^3 - \bar F^3) \frac{\omega^2}{2}, \\ & \qquad d\ps = \left( 2G (F^3 + \bar F^3) - (h^4)' \right)dr \wedge \frac{\omega^2}{2}.
\end{aligned}
\quad \quad \right\}
\end{equation}
Using the identities of~\eqref{symmetrystareq2} we immediately get
\begin{equation} \label{starfirstderivativeeqs}
\left.
\begin{aligned}
& \text{when $N^6$ is Calabi-Yau:} \\ & \qquad \st{7} \! (d\ph) = -\frac{i(F^3)'}{2G} \Omega + \frac{i(\bar F^3)'}{2G} \bar \Omega, \\ & \qquad \st{7} \! (d\ps) = - \frac{(h^4)'}{G h^2} \omega; \\ & \text{when $N^6$ is nearly K\"ahler: } \\ & \qquad \st{7} \! (d\ph) = - \frac{i}{G} \left( \frac{(F^3)'}{2} - \frac{3}{2} G h^2 \right) \Omega + \frac{i}{G} \left( \frac{(\bar F^3)'}{2} - \frac{3}{2} G h^2 \right) \bar \Omega + \frac{2 i G}{h^2} (F^3 - \bar F^3) dr \wedge \omega, \\ & \qquad \st{7} \! (d\ps) = \frac{1}{G h^2} \left( 2G (F^3 + \bar F^3) - (h^4)' \right) \omega.
\end{aligned}
\quad \right\}
\end{equation}
We are now in a position to compute the torsion forms of these $\G$-structures.

\begin{lemma} \label{torsionlemma}
For such a $\G$-structure, the zero-torsion $\tau_0$ and the one-torsion $\tau_1$ are as follows.
\begin{equation} \label{tauzerotauoneeqs}
\left.
\begin{aligned}
& \text{when $N^6$ is Calabi-Yau:} && \tau_0 = \frac{12}{7G} \theta', & & \qquad \tau_1 = d (\log h); \\ & \text{when $N^6$ is nearly K\"ahler: } && \tau_0 = \frac{12}{7} \left( \frac{\theta'}{G} + \frac{2 \sin 3 \theta}{h} \right), & & \qquad \tau_1 = \left(\frac{h' - G \cos 3 \theta}{h} \right) dr.
\end{aligned}
\quad \quad \right\}
\end{equation}
and the two-torsion $\tau_2$ always vanishes:
\begin{equation} \label{tautwoeq}
\tau_2 \, = \, 0
\end{equation}
in both the Calabi-Yau and the nearly K\"ahler cases.
\end{lemma}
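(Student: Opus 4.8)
The plan is to compute $\tau_0$ and $\tau_1$ directly from the recovery formulas \eqref{tauzeroeq} and \eqref{tauoneeq}, and to obtain $\tau_2$ from the defining relation for $d\ps$ in \eqref{torsionformseq}. All the necessary raw data is already assembled: the explicit forms of $d\ph$ and $d\ps$ in \eqref{firstderivativeeqs}, their Hodge duals in \eqref{starfirstderivativeeqs}, and the star identities \eqref{symmetrystareq}, \eqref{symmetrystareq2}. The computation rests on a short list of algebraic facts that I would record first: from \eqref{SU3relationeq} one has $\Omega \wedge \bar\Omega = -8i\,\vol{6}$ and $\omega^3 = 6\,\vol{6}$, while $\Omega \wedge \Omega = \bar\Omega \wedge \bar\Omega = 0$ and $\Omega \wedge \omega = \bar\Omega \wedge \omega = 0$ for type reasons (on a threefold any form of holomorphic degree exceeding $3$ vanishes). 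I would also substitute $F = h e^{i\theta}$ throughout, so that $F^3 \pm \bar F^3$ becomes $2h^3\cos 3\theta$ or $2ih^3\sin 3\theta$, while $F^3(\bar F^3)' - \bar F^3 (F^3)' = -6ih^6\theta'$ and $F^3(\bar F^3)' + \bar F^3(F^3)' = (h^6)' = 6h^5 h'$.

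For $\tau_0$ I would form $\ph \wedge d\ph$, a top-degree form on $M^7$. Only cross terms pairing the $dr$-free part $\tfrac{1}{2}(F^3\Omega + \bar F^3 \bar\Omega)$ of $\ph$ against the $dr$-containing part of $d\ph$ (and, in the nearly K\"ahler case, the $dr\wedge\omega$ part of $\ph$ against the $\tfrac{\omega^2}{2}$ part of $d\ph$) survive. Using the identities above this collapses to a multiple of $dr \wedge \vol{6}$, and since $\vol{7} = G h^6\, dr \wedge \vol{6}$ gives $\st{7}(dr\wedge\vol{6}) = (Gh^6)^{-1}$, dividing by $7$ yields the stated $\tau_0$. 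The nearly K\"ahler case produces the extra $\sin 3\theta$ term precisely from the $-\tfrac{3}{2}Gh^2$ shift in $d\ph$ together with the $2i(F^3-\bar F^3)\tfrac{\omega^2}{2}$ term, both of which feed into the coefficient of $F^3 - \bar F^3 = 2ih^3\sin 3\theta$.

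For $\tau_1$ I would instead form $\ph \wedge \st{7}(d\ph)$, a $6$-form on $N^6$, using $\st{7}(d\ph)$ from \eqref{starfirstderivativeeqs}. Here the terms involving $dr\wedge\omega$ and all pairings $\Omega\wedge\omega$ drop out by type, leaving a multiple of $\vol{6}$ whose coefficient is $\tfrac{12h^5h'}{G}$ in the Calabi-Yau case and $\tfrac{12h^5h'}{G} - 12h^5\cos 3\theta$ in the nearly K\"ahler case. Applying $\st{7}\vol{6} = h^{-6}G\,dr$ and multiplying by $\tfrac{1}{12}$ then recovers $\tau_1 = d(\log h)$ and $\tau_1 = \tfrac{h'-G\cos3\theta}{h}\,dr$ respectively.

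Finally, to show $\tau_2 = 0$ I would verify $d\ps = 4\tau_1 \wedge \ps$ outright, since by \eqref{torsionformseq} this is equivalent to $\st{7}\tau_2 = 0$. Wedging the now-known $\tau_1$ (a multiple of $dr$) against $\ps$ kills every term of $\ps$ except $-h^4\tfrac{\omega^2}{2}$, giving $4\tau_1\wedge\ps = -4h^3(h'-G\cos3\theta)\,dr\wedge\tfrac{\omega^2}{2}$ in the nearly K\"ahler case (the $\cos3\theta$ term is simply absent in the Calabi-Yau case). Comparing with $d\ps$ from \eqref{firstderivativeeqs}, using $(h^4)' = 4h^3h'$ and $F^3+\bar F^3 = 2h^3\cos3\theta$, the two agree identically, so $\tau_2 = 0$ in both cases. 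The only genuine bookkeeping hazard throughout is sign control when commuting $dr$ past the odd-degree forms $\Omega$ and $\bar\Omega$, and correctly orienting $\Omega\wedge\bar\Omega$ versus $\bar\Omega\wedge\Omega$; once these conventions are pinned down the remainder is mechanical.
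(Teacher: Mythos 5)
Your proposal is correct and follows essentially the same route as the paper: compute $\tau_0$ and $\tau_1$ by substituting \eqref{firstderivativeeqs} and \eqref{starfirstderivativeeqs} into the recovery formulas \eqref{tauzeroeq}--\eqref{tauoneeq}, then obtain $\tau_2 = 0$ from the defining relation $d\ps = 4\tau_1\wedge\ps + \st{7}\tau_2$ (the paper phrases this as $\tau_2 = \st{7}(d\ps) - 4\st{7}(\tau_1\wedge\ps)$, which is the same check up to applying the Hodge star). Your intermediate identities ($\Omega\wedge\bar\Omega = -8i\,\vol{6}$, $F^3(\bar F^3)' - \bar F^3(F^3)' = -6ih^6\theta'$, the coefficients $\tfrac{12h^5h'}{G}$ and $\tfrac{12h^5h'}{G}-12h^5\cos 3\theta$, and the matching of $d\ps$ with $4\tau_1\wedge\ps$) all check out, so you have in fact supplied the "tedious but straightforward" details the paper omits.
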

\begin{proof}
Substitute equations~\eqref{firstderivativeeqs} and~\eqref{starfirstderivativeeqs} into the formulas~\eqref{tauzeroeq} and~\eqref{tauoneeq} for the zero-torsion $\tau_0$ and the one-torsion $\tau_1$, and use~\eqref{symmetrystareq2}. It is a tedious but straightforward computation to obtain~\eqref{tauzerotauoneeqs}. Now equations~\eqref{torsionformseq} can be solved for the two-torsion $\tau_2$ and the three-torsion $\tau_3$:
\begin{align*}
\tau_2 & = \, \st{7} \! (d \ps) - 4 \st{7} \! ( \tau_1 \wedge \ps), \\
\tau_3 & = \, \st{7} \! (d \ph) - \tau_0 \ph - 3 \st{7} \! ( \tau_1 \wedge \ph).
\end{align*}
From these we can obtain an explicit (albeit complicated) formula for $\tau_3$, which we omit here because we will not require it in the present paper. The result of the computation for $\tau_2$ is that, in \emph{both} the Calabi-Yau and the nearly K\"ahler cases, $\tau_2 = 0$.
\end{proof}

\begin{remark}
The torsion forms for a $\G$-structure that is a warped product over a nearly K\"ahler $6$-manifold have previously appeared in Cleyton--Ivanov~\cite{Cleyton:Ivanov:2008}. The authors thank Sergey Grigorian for alerting them to this fact.
\end{remark}

The fact that these $\G$-structures always have vanishing two-torsion $\tau_2$ for any $h$ and $\theta$ will be useful later. Note that this is in stark contract to the closed $\G$-structures as studied in~\cite{ Bryant:Xu:2011, Bryant:2006, Weiss:Witt:2010, Xu:Ye:2010} where $\tau_2$ is the \emph{only} nonvanishing torsion form. It is for this reason that the sensible flow of $\G$-structures with such an $\SU{3}$ symmetry to consider this the Laplacian \emph{coflow} which we discuss in Section~\ref{coflowsec}.

\section{The Laplacian coflow of $\G$-structures} \label{coflowsec}

In this section we introduce the Laplacian \emph{coflow} of a coclosed $\G$-structure and discuss some of its general properties, including its soliton solutions. Then we concentrate specifically on the $\G$-structures~\eqref{phpseqs} arising from a warped product of $1$-manifold $L^1$ with a Calabi-Yau or a nearly K\"ahler $6$-manifold $N^6$.

\begin{definition} \label{coflowdefn}
We say that a time-dependent $\G$-structure \(\ph = \ph(t)\) on a \(7\)-manifold \(M^7\), defined for $t$ in some interval $[0, T)$, satisfies the \emph{Laplacian coflow equation} if for all
times \(t\) for which \(\ph(t)\) is defined, we have
\begin{equation} \label{cofloweq}
\pd{\ps}{t} = - \Delta_d \ps,
\end{equation}
where \(\ps(t)=\st{\ph(t)} \ph(t)\) is the Hodge dual $4$-form of $\ph(t)$ and \(\Delta_d = d d^* + d^* d\) is the Hodge Laplacian with respect to the metric $g(t) = g_{\ph(t)}$.
\end{definition}

In this paper, we will assume that this flow has short-time existence and uniqueness if we start with an initially coclosed $\G$-structure. This is very likely, since the flow is qualitatively very similar to the Laplacian flow $\pd{\ph}{t} = - \Delta_d \ph$ which does have short-time existence and uniqueness for an initially closed $\G$-structure. Also, entirely analogous to the fact that the Laplacian flow $\pd{\ph}{t} = - \Delta_d \ph$ preserves the closed condition, the Laplacian coflow $\pd{\ps}{t} = - \Delta_d \ps$ will preserve the coclosed condition. See~\cite{Bryant:Xu:2011, Bryant:2006, Xu:Ye:2010} for these results in the case of the Laplacian flow. The main goal of the present paper, in any case, is to study the soliton solutions to this flow in certain particular situations with symmetry.

\begin{remark} \label{coclosedrmk}
By equations~\eqref{torsionformseq}, a \(\G\)-structure is coclosed exactly when \(\tau_1= 0\) and \(\tau_2=0\).
\end{remark}

\subsection{Soliton solutions} \label{solitonsec}

As with the Ricci flow (and other geometric flows), it is of interest to consider ``self-similar solutions'' which are evolving by diffeomorphisms and scalings. If $f_t$ is a $1$-parameter family of diffeomorphisms generated by a vector field $X$ on $M$, and if $c(t) = 1 + \lambda t$, then differentiation shows that a coclosed $\G$-structure $\ph(t) = c(t) f_t^* \ph(t)$ is a solution to the coflow~\eqref{cofloweq} if and only if
\begin{equation} \label{coflowsolitoneq}
-\Delta_d \ps = \LieDer_{X} \ps + \lambda \ps \, = \, d(X \hook \ps) + \lambda \ps
\end{equation}
using the fact that $d\ps = 0$. In particular, a \emph{gradient coflow soliton} is a solution~\eqref{coflowsolitoneq} where $X = \nabla k$ for some $C^2$ function $k$ on $M$. As in the case of Ricci flow, we say that the soliton is \emph{expanding}, \emph{steady}, or \emph{shrinking} if $\lambda$ is positive, zero, or negative, respectively.
\begin{proposition} \label{compactsolitonsprop}
If $M^7$ is \emph{compact}, then there are no expanding or steady soliton solutions of~\eqref{coflowsolitoneq}, other than the trivial case of a torsion-free $\G$-structure in the steady case.
\end{proposition}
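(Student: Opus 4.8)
The plan is to pair the soliton equation~\eqref{coflowsolitoneq} with $\ps$ itself in the $L^2$ inner product and read off the sign of $\lambda$. Since we are dealing with a coclosed $\G$-structure we have $d\ps = 0$, so that $\Delta_d \ps = d d^* \ps$ and, by Cartan's formula, $\LieDer_X \ps = d(X \hook \ps)$ (as already noted before~\eqref{coflowsolitoneq}). Taking $\langle\langle \, \cdot \, , \ps \rangle\rangle$ of both sides of~\eqref{coflowsolitoneq} and using that $d$ and $d^*$ are $L^2$-adjoint on the compact manifold $M^7$, the left hand side becomes
\[
\langle\langle -\Delta_d \ps, \ps \rangle\rangle \, = \, -\| d^* \ps \|^2 - \| d\ps \|^2 \, = \, -\| d^* \ps \|^2 ,
\]
where the last equality uses $d\ps = 0$.

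The crux of the argument is to show that the Lie derivative term contributes nothing, that is, $\langle\langle \LieDer_X \ps, \ps \rangle\rangle = 0$. The key observation is that $\ps = \st{7} \ph$ has pointwise constant norm $|\ps|^2 = |\ph|^2 = 7$, since $\st{7}$ is a pointwise isometry and the normalization in Definition~\ref{g2structuredefn} fixes $|\ph|^2 = 7$. Writing $\LieDer_X \ps = \nabla_X \ps + A_X(\ps)$, where $A_X$ denotes the pointwise algebraic action of $\nabla X \in \operatorname{End}(TM)$ on $4$-forms, the covariant piece gives $\langle \nabla_X \ps, \ps \rangle = \tfrac{1}{2} \nabla_X |\ps|^2 = 0$. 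The algebraic piece contracts $\nabla X$ against the symmetric $2$-tensor $\ps_{a b c d} \, \ps_{e}{}^{b c d}$, which is $\G$-invariant and therefore a constant multiple of the metric $g$; hence $\langle A_X(\ps), \ps \rangle$ is a constant multiple of $\operatorname{div} X$, which integrates to zero over the compact $M^7$. (Note this holds for an arbitrary vector field $X$, not merely for gradient solitons, matching the generality of the statement.) The only surviving term on the right hand side is then $\lambda \langle\langle \ps, \ps \rangle\rangle = 7 \lambda \operatorname{Vol}(M^7)$.

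Combining these computations, the $L^2$ identity collapses to
\[
-\| d^* \ps \|^2 \, = \, 7 \lambda \operatorname{Vol}(M^7),
\]
so that $\lambda \le 0$, with equality if and only if $d^* \ps = 0$. In the expanding case $\lambda > 0$ this is an immediate contradiction, so no such solitons exist; the same rules out steady solitons with nonzero torsion. In the steady case $\lambda = 0$ we are forced to have $d^* \ps = 0$, which together with $d\ps = 0$ yields $\Delta_d \ps = 0$, i.e.\ the $\G$-structure is torsion-free. I expect the main obstacle to be the careful justification that $\langle\langle \LieDer_X \ps, \ps \rangle\rangle = 0$ --- specifically, identifying $\ps_{a b c d} \, \ps_{e}{}^{b c d}$ as a multiple of $g$ via $\G$-invariance and confirming that the resulting divergence term integrates away; the remaining steps are routine Hodge-theoretic manipulations on the compact manifold.
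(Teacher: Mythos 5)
Your proof is correct, and it reaches the paper's conclusion by a genuinely different route at the one nontrivial step, namely the vanishing of the Lie-derivative term. The paper integrates by parts, $\int_M \langle d(X\hook\ps),\ps\rangle\,\vol{} = \int_M \langle X\hook\ps,\, d^*\ps\rangle\,\vol{}$, and then argues representation-theoretically: coclosedness gives $\tau_1=0$ (with $\tau_2=0$), so $d^*\ps = \st{} d\ph = \tau_0\ph + \tau_3$ lies in $\Lambda^3_1\oplus\Lambda^3_{27}$, which is pointwise orthogonal to $X\hook\ps \in \Lambda^3_7$; the integrand therefore vanishes identically. You instead bypass the torsion forms and the $\G$-type decomposition altogether: splitting $\LieDer_X\ps = \nabla_X\ps + A_{\nabla X}(\ps)$, the covariant piece dies pointwise because $|\ps|^2 = 7$ is constant, and the algebraic piece pairs with $\ps$ to a constant multiple of $\operatorname{div} X$, since Schur's lemma (irreducibility of the standard representation of $\G$ on $\R{7}$) forces $\ps_{ebcd}\ps_a{}^{bcd} = c\, g_{ea}$ with $c$ constant (by tracing, $7c = 4!\,|\ps|^2$, so $c = 24$); the divergence then integrates to zero on compact $M$. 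Both arguments use compactness in an essential way (adjointness of $d$ and $d^*$ versus the divergence theorem), and both terminate in the same identity $\| d^*\ps \|^2 + 7\lambda\,\mathrm{Vol}(M) = 0$, which excludes $\lambda>0$ and, when $\lambda = 0$, forces $d^*\ps = 0$, i.e.\ $d\ph = 0$, hence torsion-freeness. What each buys: the paper's route yields the stronger pointwise statement $\langle X\hook\ps, d^*\ps\rangle = 0$ and makes transparent exactly where coclosedness enters; your route is more elementary and more general, since $\int_M\langle\LieDer_X\ps,\ps\rangle\,\vol{}=0$ holds for an arbitrary vector field $X$ and an arbitrary $\G$-structure, coclosedness being needed only to reduce $\Delta_d\ps$ to $dd^*\ps$ and to equate the two forms $\LieDer_X\ps = d(X\hook\ps)$ of the soliton equation. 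The only point deserving slightly more care in your write-up is the constancy (not merely pointwise proportionality to $g$) of the factor in $\ps_{ebcd}\ps_a{}^{bcd} = c\,g_{ea}$, which follows from the trace identity above together with $|\ps|^2 = 7$, as you implicitly use.
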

\begin{proof}
We take the wedge product of both sides of~\eqref{coflowsolitoneq} with $\ph = \st \ps$ and integrate over $M$ to obtain
\begin{equation} \label{nosolitonstempeq}
\int_M \langle \Delta_d \ps, \ps \rangle \, \vol{} + \lambda \int_M | \ps |^2 \, \vol{} + \int_M \langle d( X \hook \ps), \ps \rangle \, \vol{} \, = \, 0.
\end{equation}
Since $M$ is compact, we have
\begin{equation*}
\int_M \langle d( X \hook \ps), \ps \rangle \, \vol{} \, = \, \int_M  \langle X \hook \ps, d^* \ps \rangle \, \vol{}.
\end{equation*}
But the $\G$-structure is coclosed, so $\tau_1 = 0$ and hence $d^* \ps = \st{} d \st{} \ps = \st{} d \ph = \st{} (\tau_0 \ps + \st{} \tau_3) = \tau_0 \ph + \tau_3$. Therefore $d^* \ps$ lies in the space $\Lambda^4_1 \oplus \Lambda^4_{27}$, while $X \hook \ps$ lies in $\Lambda^4_7$. Since this decomposition of $\Lambda^4$ is pointwise orthogonal with respect to the metric $g_{\ph}$, we see that the last term in~\eqref{nosolitonstempeq} vanishes. Since $| \ps |^2 = 7$, we get
\begin{equation*}
\langle \langle \Delta_d \ps, \ps \rangle \rangle + 7 \lambda \int_M \vol{} \, = \, || d^* \ps ||^2 + 7 \lambda \, {\mathrm{Vol}}(M) \, = \, 0,
\end{equation*}
again using the fact that $d\ps = 0$. Thus we cannot have $\lambda >0$, and if $\lambda = 0$ then the $\G$-structure must be torsion-free. In the latter case $X$ must be a vector field generating a $\G$-symmetry: $\LieDer_{X} \ps = 0$. Since $M$ is compact, there will be no such nonzero $X$ unless $M$ has reducible holonomy (see~\cite{Joyce:2000}, for example.)
\end{proof}
\begin{remark} \label{shrinkersrmk}
It is easy to find nontrivial examples of compact shrinking solitons: a \emph{nearly} $\G$-structure is one for which $d\ps = 0$ and $d\ph = \mu \ps$ for some nonzero constant $\mu$. In this case $\Delta_d \ps = \mu^2 \ps$, and these give examples of compact shrinking solitions with $X = 0$ and $\lambda = - \mu^2$. Nearly $\G$ manifolds are those for which the metric cone over them has $\SP$ holonomy. There are many known compact examples. See~\cite{Bar:1993} or~\cite{Friedrich:et:al:1997} for more about nearly $\G$ manifolds.
\end{remark}
\begin{remark}
A very similar argument as in Proposition~\ref{compactsolitonsprop} can be used to show that for the Laplacian flow $\pd{\ph}{t} = - \Delta_d \ph$ of \emph{closed} $\G$-structures, in the compact case there are no expanding or steady solitons, other than the trivial case of a torsion-free $\G$-structure when $\lambda = 0$. The nearly $\G$ manifolds are examples of compact shrinking solitons for this flow as well.
\end{remark}

For the cohomogeneity one $\G$-structures that we consider in this paper, the only natural (with respect to the symmetry of the structure) vector fields must be of gradient type, so we will need only consider such gradient solitons, which are $C^2$ solutions $\ps(t)$ to
\begin{equation} \label{coflowgradientsolitoneq}
- \Delta_d \ps = \LieDer_{\nabla k} \ps + \lambda \ps
\end{equation}
for some $C^2$ function $k$ on $M$ and some constant $\lambda$. Also, for the examples we consider, $M^7 = N^6 \times L^1$, and while $N^6$ will always be taken to be compact, we can have either $L^1 \cong S^1$ or $L^1 \cong \R{}$, so we will not always be able to use Proposition~\ref{compactsolitonsprop}.

\subsection{The Hodge Laplacian on $M^7 = N^6 \times L^1$}

In this section we derive explicitly the Hodge Laplacian $-\Delta_d \ps$ for the $\G$-structures~\eqref{phpseqs} with $\SU{3}$ symmetry when $N^6$ is Calabi-Yau or nearly K\"ahler. Recall that we consider only coclosed $\G$-structures of these types. By Lemma~\ref{torsionlemma}, the two-torsion $\tau_2$ is always zero, but we need to \emph{impose} the condition that $\tau_1 = 0$, which, as we noted above, will be preserved under the Laplacian coflow $\pd{\ps}{t} = - \Delta_d \ps$.
\begin{assumption} \label{tauonevanishassume}
The $\G$-structure~\eqref{phpseqs} is assumed to be coclosed. Thus $\tau_1 = 0$. By Lemma~\ref{torsionlemma}, this means that we assume:
\begin{equation} \label{assumeeq}
\left.
\begin{aligned}
& \text{when $N^6$ is Calabi-Yau:} && h' = 0; \\ & \text{when $N^6$ is nearly K\"ahler: } && h' = G \cos 3 \theta.
\end{aligned}
\quad \quad \right\}
\end{equation}
\end{assumption}
With this assumption, it is easy to compute $- \Delta_d \ps = - dd^* \ps = - d \st{7} \! d \ph$.
\begin{lemma} \label{symmetrylaplacianlemma}
For these $\G$-structures, we have that
\begin{equation} \label{symmetrylaplacianeq}
\left.
\begin{aligned}
& \text{when $N^6$ is Calabi-Yau:} - \Delta_d \ps =  \left( \frac{i(F^3)'}{2G} \right)' dr \wedge \Omega + \left( - \frac{i(\bar F^3)'}{2G} \right)' dr \wedge \bar \Omega; \\ & \text{when $N^6$ is nearly K\"ahler: } - \Delta_d \ps = A dr \wedge \Omega + \bar A dr \wedge \bar \Omega + B \frac{\omega^2}{2}, \\ & \qquad \text{where } \, A = \left[ \left( \frac{i(F^3)'}{2G} - \frac{3i}{2} h^2 \right)' + 6 G h \sin 3 \theta \right] \, \text { and } \, B = \left[ -\frac{4}{G} (h^3 \cos 3 \theta)' + 12 h^2 \right].
\end{aligned}
\quad \quad \right\}
\end{equation}
\end{lemma}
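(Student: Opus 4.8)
The plan is to follow the reduction indicated just before the statement: compute $-\Delta_d\ps = -dd^*\ps = -d\st{7}d\ph$ and then apply $d$ to the expressions for $\st{7}d\ph$ already recorded in~\eqref{starfirstderivativeeqs}. First I would justify the reduction. Since the $\G$-structure is coclosed we have $d\ps=0$, so $d^*d\ps=0$ and hence $\Delta_d\ps = (dd^*+d^*d)\ps = dd^*\ps$. On the $7$-manifold the Hodge star squares to the identity on every form, so $\st{7}\ps=\ph$, and (exactly as in the proof of Proposition~\ref{compactsolitonsprop}) $d^*\ps = \st{7}d\st{7}\ps = \st{7}d\ph$. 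Therefore $-\Delta_d\ps = -d(\st{7}d\ph)$, and it remains to differentiate the right-hand side in each case.

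In the Calabi-Yau case this is immediate: the coefficients in~\eqref{starfirstderivativeeqs} depend only on $r$, while $\Omega$ and $\bar\Omega$ are closed forms on $N^6$, so $d$ acts only on the coefficients and produces $dr\wedge\Omega$ and $dr\wedge\bar\Omega$ terms whose coefficients are the $r$-derivatives of those in~\eqref{starfirstderivativeeqs}; negating gives the stated formula. The nearly K\"ahler case is the one requiring genuine work. Here $\st{7}d\ph$ is a sum of an $\Omega$-term with coefficient $a(r)$, its conjugate $\bar\Omega$-term, and a $dr\wedge\omega$-term with coefficient $c(r)$, and applying $d$ requires the structure equations~\eqref{NKequationscomplex}, namely $d\Omega = 4i\frac{\omega^2}{2}$, $d\bar\Omega = -4i\frac{\omega^2}{2}$, $d\omega = -\frac{3}{2}\Omega-\frac{3}{2}\bar\Omega$, and $d\frac{\omega^2}{2}=0$. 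I would then collect $d(\st{7}d\ph)$ into its $dr\wedge\Omega$, $dr\wedge\bar\Omega$, and $\frac{\omega^2}{2}$ components. The first two acquire $a'$ (respectively $\bar a'$) from differentiating the coefficient, together with a multiple of $c$ coming from $d(dr\wedge\omega) = -dr\wedge d\omega = \frac{3}{2}dr\wedge\Omega + \frac{3}{2}dr\wedge\bar\Omega$; the $\frac{\omega^2}{2}$ component picks up $4i(a-\bar a)$ from $a\,d\Omega + \bar a\,d\bar\Omega$. Negating produces $A$, $\bar A$, and $B$.

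The final step is to rewrite these expressions, which appear in terms of $(F^3)'$ and $c$, in the stated form involving $h$, $\theta$, $\sin 3\theta$ and $\cos 3\theta$. This uses $F = he^{i\theta}$, so $F^3 = h^3 e^{3i\theta}$ and hence $F^3 - \bar F^3 = 2ih^3\sin 3\theta$ and $F^3 + \bar F^3 = 2h^3\cos 3\theta$. Substituting the first into $c = \frac{2iG}{h^2}(F^3-\bar F^3)$ yields $c = -4Gh\sin 3\theta$, which converts the $c$-contribution in the $dr\wedge\Omega$ component into the $6Gh\sin 3\theta$ appearing in $A$; substituting the second into $a-\bar a$ produces the $(h^3\cos 3\theta)'$ term in $B$. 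I expect no conceptual obstacle, since the reduction is forced by coclosedness together with the Hodge-star identities, and the rest is bookkeeping. The one place to be careful is the sign and factor tracking in the nearly K\"ahler case—in particular the contribution of $d\omega$ to $d(dr\wedge\omega)$ and the conversion between the complex coefficients and the real variables $h$ and $\theta$—since a slip there would corrupt $A$ or $B$.
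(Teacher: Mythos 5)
Your proposal is correct and takes essentially the same route as the paper's own proof: both reduce $-\Delta_d \ps$ to $-d \st{7}\!(d\ph)$ using coclosedness, apply $d$ to the expressions in~\eqref{starfirstderivativeeqs} with the structure equations~\eqref{NKequationscomplex}, and convert to $h$, $\theta$ via $F^3 - \bar F^3 = 2ih^3\sin 3\theta$ and $F^3 + \bar F^3 = 2h^3\cos 3\theta$. Your sign and factor bookkeeping (e.g.\ $c = -4Gh\sin 3\theta$ contributing $-\tfrac{3}{2}c = 6Gh\sin 3\theta$ to $A$, and the $-4i(a - \bar a)$ term yielding $B$) checks out exactly against the stated formulas.
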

\begin{proof}
We use the expression for $\st{7} \! (d\ph)$ that we derived in~\eqref{starfirstderivativeeqs}. In the Calabi-Yau case, we have
\begin{equation*}
- \Delta_d \ps \, = \, - d \st{7} \! (d\ph) \, = \, \left( \frac{i(F^3)'}{2G} \right)' dr \wedge \Omega + \left( - \frac{i(\bar F^3)'}{2G} \right)' dr \wedge \bar \Omega
\end{equation*}
since $d\Omega = 0$ and $d\bar \Omega = 0$. This establishes the first half of~\eqref{symmetrylaplacianeq}. In the nearly K\"ahler case, we use also~\eqref{NKequationscomplex} to obtain
\begin{align*}
- \Delta_d \ps \, = \, - d \st{7} \! (d\ph) & = \, \left( \frac{i(F^3)'}{2G} - \frac{3i}{2} h^2 \right)' dr \wedge \Omega + \left( \frac{-i(F^3)'}{2G} + \frac{3i}{2} h^2 \right)' dr \wedge \bar \Omega \\ & \quad {} + \left(\frac{i(F^3)'}{2G} - \frac{3i}{2} h^2 \right) d\Omega +  \left( \frac{-i(\bar F^3)'}{2G} + \frac{3i}{2} h^2 \right) d\bar \Omega + \frac{2 i G}{h^2} (F^3 - \bar F^3) dr \wedge d\omega \\
& = \left[ \left( \frac{i(F^3)'}{2G} - \frac{3i}{2} h^2 \right)' - \frac{3 i G}{h^2} (F^3 - \bar F^3) \right] dr \wedge \Omega \\ & \quad {} + \left[ \left( \frac{-i(F^3)'}{2G} + \frac{3i}{2} h^2 \right)' - \frac{3 i G}{h^2} (F^3 - \bar F^3) \right] dr \wedge \bar \Omega \\  & \quad {} + \left[ 4i \left(\frac{i(F^3)'}{2G} - \frac{3i}{2} h^2 \right) -4i \left(\frac{-i(F^3)'}{2G} + \frac{3i}{2} h^2 \right) \right] \frac{\omega^2}{2}.
\end{align*}
Using $F = h e^{i \theta}$, this expression simplifies to
\begin{align*}
- \Delta_d \ps & = \, \left[ \left( \frac{i(F^3)'}{2G} - \frac{3i}{2} h^2 \right)' + 6 G h \sin 3 \theta \right] dr \wedge \Omega \\ & \quad {} + \left[ \left( \frac{-i(F^3)'}{2G} + \frac{3i}{2} h^2 \right)' + 6 G h \sin 3 \theta \right] dr \wedge \bar \Omega \\  & \quad {} + \left[ -\frac{4}{G} (h^3 \cos 3 \theta)' + 12 h^2 \right] \frac{\omega^2}{2}.
\end{align*}
which establishes the second half of~\eqref{symmetrylaplacianeq}.
\end{proof}
Recall that we have
\begin{equation} \label{pstempeq}
\ps = \frac{i G F^3}{2} dr \wedge \Omega - \frac{i G \bar F^3}{2} dr \wedge \bar \Omega - h^4 \frac{\omega^2}{2}.
\end{equation}
In the Laplacian coflow $\pd{\ps}{t} = - \Delta_d \ps$, only the functions $F = h e^{i\theta}$ and $G$ depend on $t$ and the  coordinate $r$ on $L^1$. We are now ready to study the coflow and corresponding soliton equations in detail for the Calabi-Yau and the nearly K\"ahler cases in the next two sections.

\section{The case when $N^6$ is Calabi-Yau} \label{CYsec}

We begin with the evolution equations.

\subsection{The CY evolution equations} \label{CYevolutionsec}

\begin{theorem} \label{CYcoflowthm}
Let $N^6$ be Calabi-Yau, and let $M^7 = N^6 \times L^1$ be a manifold with \emph{coclosed} $\G$-structure given by~\eqref{phpseqs}, with $d\ps = 0$. Then under the Laplacian coflow $\pd{\ps}{t} = - \Delta_d \ps$, the functions $F = h e^{i \theta}$ and $G$ on $L^1$ (depending also on the  time parameter $t$) satisfy the following evolution equations.
\begin{align} \label{CYfloweqs1}
& h \, = \, 1, & &  \pd{\theta}{t} \, = \, \Delta \theta, & & \pd{G}{t} \, = \, - 9 G | \nabla \theta |^2,
\end{align}
where the rough Laplacian $\Delta$, the gradient $\nabla$, and the pointwise norm $| \cdot |$ are all taken with respect to the metric $g_7 = G^2 dr^2 + h^6 g_6$ on $M^7$.
\end{theorem}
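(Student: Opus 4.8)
The plan is to start from the Laplacian coflow equation $\pd{\ps}{t} = -\Delta_d \ps$ and match it term-by-term against the time-derivative of the explicit form of $\ps$ given in~\eqref{pstempeq}. In the Calabi-Yau case, Lemma~\ref{symmetrylaplacianlemma} already supplies the right-hand side $-\Delta_d \ps$ as a combination of $dr \wedge \Omega$, $dr \wedge \bar\Omega$, with \emph{no} $\frac{\omega^2}{2}$ component. Since $\Omega$, $\bar\Omega$, and $\frac{\omega^2}{2}$ are pointwise linearly independent forms on $N^6$ (and $r,t$ are the only parameters on which the coefficients depend), the flow equation decouples into three scalar PDEs obtained by equating coefficients. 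I would first compute $\pd{\ps}{t}$ directly by differentiating~\eqref{pstempeq}, being careful that $F = he^{i\theta}$ and $G$ all depend on both $r$ and $t$.

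First I would extract the $\frac{\omega^2}{2}$ component. Differentiating the term $-h^4 \frac{\omega^2}{2}$ in $\ps$ gives $-4h^3 \pd{h}{t} \frac{\omega^2}{2}$, while Lemma~\ref{symmetrylaplacianlemma} shows the $\frac{\omega^2}{2}$ component of $-\Delta_d\ps$ vanishes identically. Hence $\pd{h}{t} = 0$. Combined with the coclosed Assumption~\ref{tauonevanishassume}, which in the Calabi-Yau case reads $h' = 0$, this forces $h$ to be a constant in both $r$ and $t$; after rescaling the $\SU{3}$-structure we may take $h = 1$, establishing the first equation in~\eqref{CYfloweqs1}. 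The remaining two equations then come from the $dr\wedge\Omega$ component. Once $h=1$, we have $F^3 = e^{3i\theta}$, so $(F^3)' = 3i\theta' e^{3i\theta}$. Differentiating the coefficient $\frac{iGF^3}{2}$ of $dr \wedge \Omega$ in $\ps$ with respect to $t$, and setting it equal to the coefficient $\bigl(\frac{i(F^3)'}{2G}\bigr)'$ from Lemma~\ref{symmetrylaplacianlemma}, yields a single complex scalar equation in $G$ and $\theta$.

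The key step is then to separate this complex equation into its real and imaginary parts. Writing out $\pd{}{t}\bigl(\tfrac{iG}{2}e^{3i\theta}\bigr) = \bigl(\tfrac{i}{2G}\cdot 3i\theta' e^{3i\theta}\bigr)'$ and using $h=1$ in the rough-Laplacian formula~\eqref{roughlaplacianeq} (which with $h=1$ reduces to $\Delta\theta = \frac{\theta''}{G^2} - \frac{\theta' G'}{G^3}$) and the gradient formula~\eqref{gradienteq} (giving $|\nabla\theta|^2 = (\theta')^2/G^2$), I expect the imaginary part to collapse to $\pd{\theta}{t} = \Delta\theta$ and the real part to $\pd{G}{t} = -9G|\nabla\theta|^2$. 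The main obstacle—and the step deserving the most care—is the bookkeeping in this real/imaginary separation: one must correctly expand the product rule on both sides, factor out $e^{3i\theta}$, and recognize the resulting coefficient combinations as exactly the invariant expressions $\Delta\theta$ and $9G|\nabla\theta|^2$. This is a tedious but essentially mechanical verification, so the real content of the proof is the clean decoupling into components and the identification of the scalar ODE coefficients with the geometric operators $\Delta$, $\nabla$, and $|\cdot|$ from Section~\ref{formulassec}.
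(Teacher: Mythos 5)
Your proposal is correct and follows essentially the same route as the paper's proof: equate the $dr\wedge\Omega$, $dr\wedge\bar\Omega$, and $\frac{\omega^2}{2}$ coefficients of $\pd{\ps}{t}=-\Delta_d\ps$ using Lemma~\ref{symmetrylaplacianlemma}, deduce $\pd{h}{t}=0$ from the vanishing $\frac{\omega^2}{2}$ component and normalize $h=1$, then split the remaining complex equation into real and imaginary parts and identify them with $\pd{G}{t}=-9G|\nabla\theta|^2$ and $\pd{\theta}{t}=\Delta\theta$ via~\eqref{roughlaplacianeq} and~\eqref{gradienteq}. The only detail to confirm in the write-up is the mechanical real/imaginary bookkeeping, which works out exactly as you predict.
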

\begin{proof}
Differentiating~\eqref{pstempeq} with respect to $t$ and using Lemma~\ref{symmetrylaplacianlemma}, we can compute $\pd{\ps}{t} = - \Delta_d \ps$ and equate the coefficients of $dr \wedge \Omega$, $dr \wedge \bar \Omega$, and $\frac{\omega^2}{2}$. We find that
\begin{equation*}
\frac{\partial}{\partial t} \left( \frac{i G F^3}{2} \right) \, = \, \left( \frac{i(F^3)'}{2G} \right)' \qquad \text{ and } \qquad \frac{\partial}{\partial t} (- h^4) \, = \, 0.
\end{equation*}
The second equation says that $\pd{h}{t} = 0$, so that $h$ is constant in time as well. (Recall from~\eqref{assumeeq} that the condition $\tau_1 = 0$ in this case was that $h$ is also independent of $r$.) Without loss of generality, by rescaling the metric on the Calabi-Yau manifold $N^6$, we can assume that $h=1$ from now on. Substituting $h=1$ into the first equation above and simplifying, we obtain
\begin{equation*}
\frac{\partial}{\partial t} \left( G e^{i 3 \theta} \right) \, = \, \left( \frac{(e^{i 3 \theta})'}{G} \right)'.
\end{equation*}
Expanding and simplifying, we have
\begin{equation*}
\left( \pd{G}{t} + 3 i G \pd{\theta}{t} \right) e^{i 3 \theta} \, = \, \left( \frac{3i \theta'}{G} e^{i3 \theta} \right)' \, = \, \left( - \frac{3i G' \theta'}{G^2} + \frac{3i \theta''}{G} - \frac{9 (\theta')^2}{G} \right) e^{3i \theta}.
\end{equation*}
Equating real and imaginary parts gives
\begin{equation*}
\pd{G}{t} \, = \, - \frac{9(\theta')^2}{G}, \qquad \qquad \pd{\theta}{t} \, = \, \frac{\theta''}{G^2} - \frac{G' \theta'}{G^3}. 
\end{equation*}
Since in this case we have $h=1$, equations~\eqref{roughlaplacianeq} and~\eqref{gradienteq} give that the above equations can be invariantly expressed as
\begin{equation*}
\pd{G}{t} \, = \, - 9 G | \nabla \theta |^2, \qquad \qquad \pd{\theta}{t} \, = \, \Delta \theta, 
\end{equation*}
which is what we wanted to prove.
\end{proof}

Note that even though the phase function $\theta(r,t)$ satisfies what appears to be a simple heat equation, the Laplacian $\Delta$ is taken with respect to the metric~\eqref{symmetrymetriceq} that is changing with time. This makes it very difficult to establish long-time existence without a much more delicate analysis. In general, we expect that there should be singularity formation in finite time, as is the case with most geometric evolution equations.

\subsection{The CY soliton equations} \label{CYsolitonsec}

Next, we turn to the \emph{soliton} equations in this case. Since this is a time-static situation, we can without loss of generality reparametrize the local coordinate $r$ so that $G = 1$, as discussed in Remark~\ref{Gremark}. We are looking for soliton solutions which have the same $\SU{3}$ symmetry as the evolution equations,  so the only possible vector fields are of the form $X = s(r) \frac{\partial}{\partial r}$ for some function $s = s(r)$ on $L^1$. By letting $k(r) = \int_{r_o}^r s(u) du$ be an antiderivative, we can assume that $X = \nabla k = k' \frac{\partial}{\partial r}$ is a gradient vector field for some function $k = k(r)$ on $L^1$. Note that since $G = 1$, we do indeed have $(dr)^{\sharp} = \frac{\partial}{\partial r}$ so this is the correct expression for $\nabla k$. The soliton equation, as derived in~\eqref{coflowgradientsolitoneq}, is
\begin{equation} \label{solitonCYtempeq}
- \Delta_d \ps = \LieDer_{\nabla k} \ps + \lambda \ps = d({\nabla k} \hook \ps) + \lambda \ps
\end{equation}
since $d\ps = 0$.
\begin{theorem} \label{CYsolitonthm}
The coclosed $\G$-structures which satisfy the soliton equation~\eqref{solitonCYtempeq} when $N^6$ is Calabi-Yau are given by~\eqref{phpseqs} where $G=1$ and
\begin{equation*}
\lambda = 0, \qquad \quad h = 1, \qquad \quad \theta = \frac{2}{3} \arctan ( c e^{br} ), \qquad \quad X = b \left( \frac{1 - c^2 r^{2br}}{1 + c^2 e^{2br}} \right) \frac{\partial}{\partial r},
\end{equation*}
for some real constants $b$ and $c$. In particular, all the soliton solutions are \emph{steady} and the only solutions which exist in the case that $L^1 \cong S^1$ is \emph{compact} are constant $\theta$ and $k'$ (corresponding to $b = 0$ or $c=0$) which are trivial translations and phase rotations of the standard torsion-free $\G$-structure on $N^6 \times S^1$. However, in the case where $L^1 \cong \R{}$ is \emph{noncompact}, we do obtain nontrivial soliton solutions on $N^6 \times \R{}$.
\end{theorem}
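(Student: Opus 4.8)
The plan is to translate the soliton equation~\eqref{solitonCYtempeq} into ODEs by computing each of its three terms explicitly in the basis $\{dr \wedge \Omega, dr \wedge \bar\Omega, \tfrac{\omega^2}{2}\}$, then match coefficients. With $G=1$ and $h=1$ (the latter forced by the coclosed Assumption~\ref{tauonevanishassume} in the Calabi-Yau case), the dual $4$-form from~\eqref{pstempeq} becomes $\ps = \tfrac{i}{2} e^{3i\theta} dr \wedge \Omega - \tfrac{i}{2} e^{-3i\theta} dr \wedge \bar\Omega - \tfrac{\omega^2}{2}$. First I would compute the left-hand side $-\Delta_d \ps$ directly from Lemma~\ref{symmetrylaplacianlemma} (setting $G=1$, $h=1$), which gives a coefficient involving $\theta''$ and $(\theta')^2$ on $dr \wedge \Omega$. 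Next I would compute $\nabla k \hook \ps$: since $\nabla k = k' \frac{\partial}{\partial r}$ and contracting $\frac{\partial}{\partial r}$ with $dr \wedge \Omega$ yields $\Omega$, the interior product produces a $3$-form $\tfrac{i}{2} k' e^{3i\theta}\Omega + (\text{c.c.})$, whose exterior derivative (using $d\Omega = 0$ in the Calabi-Yau case, so only the $dr$-derivative survives) lands back in the span of $dr \wedge \Omega$ and $dr\wedge\bar\Omega$. The term $\lambda\ps$ is immediate.

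**Extracting and solving the ODEs.** Matching the coefficient of $\tfrac{\omega^2}{2}$ first: the left-hand side contributes nothing on $\tfrac{\omega^2}{2}$ (in the CY case $B$ does not appear — $d\omega=0$), $\nabla k \hook \ps$ contributes nothing there either after $d$, and $\lambda\ps$ contributes $-\lambda\,\tfrac{\omega^2}{2}$. This forces $\lambda = 0$, establishing that all solutions are \emph{steady}. Matching the coefficient of $dr\wedge\Omega$ then yields a single complex ODE relating $\theta''$, $(\theta')^2$, and $k'\theta'$; separating it (it is essentially linear in the unknown $e^{3i\theta}$ after the substitution used in the proof of Theorem~\ref{CYcoflowthm}) produces a real second-order equation for $\theta$ coupled to $k'$. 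I expect this to integrate to $\theta' = \tfrac{2}{3} \cdot \frac{bc\,e^{br}}{1 + c^2 e^{2br}}$, whose antiderivative is exactly $\theta = \tfrac{2}{3}\arctan(c e^{br})$, and the residual equation then pins down $X = \nabla k = b\left(\frac{1 - c^2 e^{2br}}{1 + c^2 e^{2br}}\right)\frac{\partial}{\partial r}$.

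**The compact versus noncompact dichotomy.** Once the explicit solutions are in hand, the final qualitative claims follow by inspection. On $L^1 \cong \R{}$ the functions $\theta$ and $k'$ are genuinely non-constant whenever $b \neq 0$ and $c \neq 0$, giving honest nontrivial steady solitons. On $L^1 \cong S^1$, periodicity in $r$ must be imposed: since $e^{br}$ and the rational expressions above are monotone (never periodic) unless $b=0$ or $c=0$, the only admissible solutions are constant $\theta$ and constant $k'$. I would interpret these as trivial translations (constant $k'$ shifts along $L^1$) and phase rotations (constant $\theta$ replaces $\Omega$ by $e^{3i\theta}\Omega$, the same $\mathrm{U}(3)$-structure as noted after~\eqref{SU3relationeq}) of the standard torsion-free product structure.

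**Anticipated obstacle.** The routine part is the bookkeeping of interior products and Hodge stars via~\eqref{symmetrystareq2}; the genuinely delicate step is the integration of the second-order ODE for $\theta$ into closed form. The key simplification is that, because $\lambda = 0$ and $h$ is constant, the equation decouples into a first integral — one should guess that $\frac{d}{dr}\big(\text{something like } e^{-3i\theta}\,\theta'/k' \text{ or } G^{-1}(e^{3i\theta})'\big)$ is controlled, reducing the order. Verifying that the proposed $\arctan$ form indeed satisfies both the $\theta$-equation and the $k'$-equation simultaneously (i.e.\ that the two real equations obtained from the single complex coefficient are consistent and jointly determine $X$) is where care is needed; the cleanest route is to substitute the ansatz $\tan\tfrac{3\theta}{2} = c e^{br}$ and verify directly, rather than integrating blind.
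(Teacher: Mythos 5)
Your setup is correct and is exactly the paper's: matching coefficients of $dr \wedge \Omega$, $dr \wedge \bar\Omega$, and $\frac{\omega^2}{2}$ in~\eqref{solitonCYtempeq}, with the $\frac{\omega^2}{2}$ component giving $0 = -\lambda h^4$ and hence $\lambda = 0$. The genuine gap is in how you propose to handle the remaining complex ODE. The theorem is a \emph{classification} (``the coclosed $\G$-structures which satisfy the soliton equation \dots\ are given by \dots''), so the route you call cleanest --- substituting the ansatz $\tan\frac{3\theta}{2} = c e^{br}$ and verifying --- cannot prove it: verification shows these are solutions, not that they are the \emph{only} ones, and exhaustiveness is precisely what the $S^1$ claim rests on (without it you cannot rule out other, possibly periodic, solitons on $N^6 \times S^1$, so your monotonicity argument has nothing to apply to). Your primary route (``I expect this to integrate to \dots'') has the same defect unless the integration is actually exhibited, and the conserved quantities you guess ($e^{-3i\theta}\theta'/k'$, or $(e^{3i\theta})'$ when $G=1$) are not conserved: both omit the coupling to $k'$. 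Note also that the complex ODE contains $k''$ (coming from $(e^{3i\theta}k')'$), not merely $k'\theta'$ as you state; it is exactly this term that makes the equation an exact derivative jointly in $\theta$ and $k$.

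Here is what closes the argument, and what the paper does. With $\lambda = 0$ and $h = 1$, the $dr\wedge\Omega$ coefficient reads $(e^{3i\theta})'' - (e^{3i\theta}k')' = 0$, which is already a total derivative, so the correct first integral is
\begin{equation*}
(e^{3i\theta})' - e^{3i\theta} k' \, = \, -b, \qquad b = b_1 + i b_2 \in \C{}.
\end{equation*}
Taking real and imaginary parts and eliminating $k'$ (multiply the two real equations by $\sin 3\theta$ and $\cos 3\theta$ and combine; equivalently, divide the complex equation by $e^{3i\theta}$) yields the pair of \emph{first-order} equations $3\theta' = b_1 \sin 3\theta - b_2 \cos 3\theta$ and $k' = b_1 \cos 3\theta + b_2 \sin 3\theta$, so $k'$ is determined algebraically by $\theta$ and the $\theta$-equation is autonomous. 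One further idea is needed to reach the stated form with \emph{real} constants: the freedom $\Omega \mapsto e^{i\gamma}\Omega$ (the holomorphic volume form of a Calabi-Yau is only defined up to a constant phase) lets you normalize $b_2 = 0$, after which $3\, d\theta / \sin 3\theta = b \, dr$ is separable with general solution $\theta = \frac{2}{3}\arctan(c e^{br})$, and then $k' = b \cos 3\theta = b \left( \frac{1 - c^2 e^{2br}}{1 + c^2 e^{2br}} \right)$ gives the stated $X$. Since \emph{every} solution of the soliton equation arises this way, the classification --- and with it your (otherwise correct) non-periodicity argument in the compact case --- is complete.
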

\begin{proof}
We compute using~\eqref{pstempeq} and $G=1$ that
\begin{equation*}
d({\nabla k} \hook \ps) = d \left( k' \frac{\partial}{\partial r} \hook \ps \right) \, = \, d \left( \frac{i F^3 k'}{2}  \Omega - \frac{i \bar F^3 k'}{2} \bar \Omega \right) \, = \, \frac{i}{2} (F^3 k')' dr \wedge \Omega - \frac{i}{2} (\bar F^3 k')' dr \wedge \bar \Omega.
\end{equation*}
Substituting the above expression into~\eqref{solitonCYtempeq} and using~\eqref{pstempeq} and~\eqref{symmetrylaplacianeq}, and comparing coefficients, we have
\begin{equation*}
\frac{i}{2} (F^3)'' \, = \, \lambda \frac{i F^3}{2} + \frac{i}{2} (F^3 k')', \qquad \qquad 0 \, = \, - \lambda h^4.
\end{equation*}
Since $h > 0$, the second equation says $\lambda = 0$. That is, there are \emph{only} steady solitons in this case. Comparing with Remark~\ref{shrinkersrmk}, this implies  (at least if $L^1$ is compact) that this $\G$-structure cannot be nearly $\G$. Indeed, it is easy to check directly that for this ansatz, the three-torsion $\tau_3$ will vanish only when $\tau_0$ also vanishes and $\ph$ is completely torsion-free. Now with $\lambda = 0$, and recalling that $h=1$, the first equation above simplifies to
\begin{equation*}
(e^{i 3 \theta})'' - (e^{i 3 \theta} k')' \, = \, 0,
\end{equation*}
which can be immediately integrated once to yield
\begin{equation*}
(e^{i 3 \theta})' - e^{i 3 \theta} k' \, = \, -b = -(b_1 + i b_2)
\end{equation*}
for some constant $b \in \C{}$. Taking real and imaginary parts, we get
\begin{equation} \label{solitonCYtempeq2}
(\cos 3 \theta)' - (\cos 3 \theta) k' = -b_1, \qquad \qquad (\sin 3 \theta)' - (\sin 3 \theta) k' = -b_2.
\end{equation}
In~\eqref{solitonCYtempeq2}, if we multiply the first equation by $\sin 3 \theta$ and the second equation by $\cos 3 \theta$ and subtract, we eliminate $k'$ and obtain
\begin{align*}
-b_1 \sin 3 \theta + b_2 \cos 3 \theta & = (\sin 3 \theta) (\cos 3 \theta)' - (\cos 3 \theta) (\sin 3 \theta)' \\
& = - 3 \theta' \sin^2 3 \theta - 3 \theta' \cos^2 3 \theta = - 3 \theta'
\end{align*}
and thus
\begin{equation} \label{solitonCYthetaeq}
3\theta' = b_1 \sin 3\theta - b_2 \cos 3 \theta.
\end{equation}
But in~\eqref{solitonCYtempeq2}, we can also multiply the first equation by $\cos 3 \theta$ and the second equation by $\sin 3 \theta$ and add, and we find that
\begin{equation*}
-b_1 \cos 3 \theta - b_2 \sin 3 \theta = (\cos 3\theta) (\cos 3\theta)' + (\sin 3 \theta) (\sin 3 \theta)' - (\cos^2 3 \theta) k' - (\sin^2 3 \theta) k' = - k'
\end{equation*}
and therefore
\begin{equation} \label{solitonCYkeq}
k' = b_1 \cos 3\theta + b_2 \sin 3 \theta.
\end{equation}
Equation~\eqref{solitonCYthetaeq} can actually be integrated exactly, although the solution is quite complicated for general $b \in \C{}$. However, given any values $\theta(r_o)$ and $k'(r_o)$ of the functions $\theta$ and $k'$ at some fixed point $r_o \in L^1$, we see that by performing a ``rotation'' of the Calabi-Yau holomorphic volume form $\Omega \mapsto e^{i \gamma} \Omega$ for an appropriate constant $\gamma$, we can arrange that $b_2 = 0$ so $b = b_1$ is purely real. We are always free to do such a rotation because the holomorphic volume form $\Omega$ of a Calabi-Yau manifold is only defined up to a constant phase factor. Then equation~\eqref{solitonCYthetaeq}  becomes
\begin{equation*}
\frac{3 d\theta}{\sin 3\theta} \, = \, b \, dr
\end{equation*}
which has solution
\begin{equation*}
\theta(r) \, = \, \frac{2}{3} \arctan ( c e^{br} )
\end{equation*}
for some real constants $b$ and $c$ depending on the ``initial'' conditions. This can then be substituted into~\eqref{solitonCYkeq} to directly solve for $k'$. We have
\begin{equation*}
k' \, = \, b \cos (2 \arctan (c e^{br})) \, = \, b \left( \frac{1 - c^2 r^{2br}}{1 + c^2 e^{2br}} \right),
\end{equation*}
and the proof is complete.
\end{proof}

We remark that since $h=1$ and $G=1$ for these soliton solutions, the metric~\eqref{symmetrymetriceq} on $M^7$ is just the product of the flat metric on $L^1$ and the Calabi-Yau metric on $N^6$. While the metric in this case is not new, the corresponding $\G$-structure $\ph$ is in general \emph{not} torsion-free. Indeed, from Lemma~\ref{torsionlemma} we see that $\tau_0$ will not vanish unless $\theta$ is constant. This is similar, but slightly different, to the fact that the standard Euclidean metric on $\R{n}$ can be written in a non-trivial way as a gradient Ricci soliton.

\section{The case when $N^6$ is nearly K\"ahler} \label{NKsec}

Now suppose that $N^6$ is nearly K\"ahler. Again we begin with the evolution equations.

\subsection{The NK evolution equations} \label{NKevolutionsec}

\begin{theorem} \label{NKcoflowthm}
Let $N^6$ be nearly K\"ahler, and let $M^7 = N^6 \times L^1$ be a manifold with \emph{coclosed} $\G$-structure given by~\eqref{phpseqs}, with $d\ps = 0$. Then under the Laplacian coflow $\pd{\ps}{t} = - \Delta_d \ps$, the functions $F = h e^{i \theta}$ and $G$ on $L^1$ (depending also on the  time parameter $t$) satisfy the following evolution equations.
\begin{align} \label{NKcofloweqs1}
& \frac{\partial h}{\partial t} = \Delta h - \frac{3}{h}\left(1 +{ |\nabla h|}^2 \right), & & \pd{\theta}{t} = \Delta \theta - \frac{\sin 6 \theta}{h^2}, & & \pd{G}{t} = - \left(9 \, {|\nabla \theta|}^2 + 3 \, {\left| \frac{\sin 3 \theta}{h} \right|}^2 \right) G,
\end{align}
where the rough Laplacian $\Delta$, the gradient $\nabla$, and the pointwise norm $| \cdot |$ are all taken with respect to the metric $g_7 = G^2 dr^2 + h^6 g_6$ on $M^7$.
\end{theorem}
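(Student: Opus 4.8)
The plan is to follow the template of the Calabi--Yau computation in Theorem~\ref{CYcoflowthm}, now retaining the nearly K\"ahler torsion terms. First I would differentiate the expression~\eqref{pstempeq} for $\ps$ with respect to $t$, noting that only $F = he^{i\theta}$ and $G$ carry time dependence, to obtain
\[
\pd{\ps}{t} = \pd{}{t}\!\left(\tfrac{iGF^3}{2}\right) dr \wedge \Omega - \pd{}{t}\!\left(\tfrac{iG\bar F^3}{2}\right) dr \wedge \bar\Omega - \pd{(h^4)}{t}\,\tfrac{\omega^2}{2},
\]
and then set this equal to the nearly K\"ahler expression $-\Delta_d\ps = A\,dr\wedge\Omega + \bar A\,dr\wedge\bar\Omega + B\,\tfrac{\omega^2}{2}$ supplied by Lemma~\ref{symmetrylaplacianlemma}. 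Since $dr\wedge\Omega$, $dr\wedge\bar\Omega$ and $\tfrac{\omega^2}{2}$ are pointwise linearly independent, matching coefficients yields one real equation (from $\tfrac{\omega^2}{2}$) together with one complex equation (from $dr\wedge\Omega$, whose conjugate is the $dr\wedge\bar\Omega$ equation and so gives nothing new). These three real equations will determine $\pd{h}{t}$, $\pd{\theta}{t}$ and $\pd{G}{t}$.

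The $\tfrac{\omega^2}{2}$ coefficient gives $-4h^3\pd{h}{t} = B = -\tfrac{4}{G}(h^3\cos 3\theta)' + 12h^2$. Here I would immediately invoke the coclosed constraint~\eqref{assumeeq}, namely $h' = G\cos 3\theta$ from Assumption~\ref{tauonevanishassume} (preserved by the coflow), to rewrite $h^3\cos 3\theta = h^3 h'/G$, expand the derivative, and recognize the outcome via~\eqref{roughlaplacianeq} and~\eqref{gradienteq} as $\pd{h}{t} = \Delta h - \tfrac{3}{h}\bigl(1 + |\nabla h|^2\bigr)$, the first equation of~\eqref{NKcofloweqs1}.

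For the remaining two equations I would write $GF^3 = Gh^3 e^{3i\theta}$ and expand both sides of the complex equation $\pd{}{t}\bigl(\tfrac{i}{2}Gh^3 e^{3i\theta}\bigr) = A$. The left side contributes $\bigl(\tfrac{i}{2}(Gh^3)_t - \tfrac{3}{2}Gh^3\theta_t\bigr)e^{3i\theta}$; on the right, the holomorphic piece $\bigl(\tfrac{i(F^3)'}{2G}\bigr)'$ again carries a clean factor $e^{3i\theta}$, but the nearly K\"ahler torsion pieces $-3ihh'$ and $6Gh\sin 3\theta$ do not, so dividing through by $e^{3i\theta}$ produces cross terms proportional to $e^{-3i\theta} = \cos 3\theta - i\sin 3\theta$. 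Taking the \emph{real} part and dividing by $-\tfrac{3}{2}Gh^3$, the derivative terms assemble through~\eqref{roughlaplacianeq} into $\Delta\theta$, leaving a residual $\tfrac{2h'\sin 3\theta}{Gh^2} - \tfrac{2\sin 6\theta}{h^2}$; the constraint $h' = G\cos 3\theta$ converts $\tfrac{2h'\sin 3\theta}{Gh^2}$ into $\tfrac{\sin 6\theta}{h^2}$, giving $\pd{\theta}{t} = \Delta\theta - \tfrac{\sin 6\theta}{h^2}$. Taking the \emph{imaginary} part, which contains $(Gh^3)_t = G_t h^3 + 3Gh^2 h_t$, I would substitute the already-determined $\pd{h}{t}$; the second-order $h''$ and $h'G'$ terms then cancel, and using $(h')^2 = G^2\cos^2 3\theta$ with the Pythagorean identity the remaining algebraic terms collapse to $-\tfrac{3}{2}Gh\sin^2 3\theta$, yielding $\pd{G}{t} = -\bigl(9|\nabla\theta|^2 + 3\bigl|\tfrac{\sin 3\theta}{h}\bigr|^2\bigr)G$ after a final application of~\eqref{gradienteq}.

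I expect the main obstacle to be the bookkeeping in this last complex equation: correctly tracking the $e^{-3i\theta}$ cross terms generated by the nearly K\"ahler torsion (these are precisely what produce the new $\sin 6\theta$ and $\sin^2 3\theta$ reaction terms absent in the Calabi--Yau case), and then coordinating the substitution of $\pd{h}{t}$ into the imaginary part so that the genuinely second-order contributions cancel. The repeated, essential use of the coclosed constraint $h' = G\cos 3\theta$ to convert the raw trigonometric expressions into invariant form is the step most prone to error, and it is what forces the nearly K\"ahler nonlinearities into their stated shape.
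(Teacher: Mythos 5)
Your proposal is correct --- I have checked the three computations it rests on (the residual $\tfrac{2h'\sin 3\theta}{Gh^2} - \tfrac{2\sin 6\theta}{h^2}$ in the real part, the cancellation of the $h''$ and $h'G'$ terms after substituting $\pd{h}{t}$ into the imaginary part, and the collapse of the algebraic terms to $-\tfrac{3}{2}Gh\sin^2 3\theta$) and they all work out --- but your handling of the complex equation is genuinely different from the paper's, and in a way worth noting. The paper takes real and imaginary parts of $\pd{}{t}(GF^3) = \bigl(\tfrac{(F^3)'}{G} - 3h^2\bigr)' - 12iGh\sin 3\theta$ \emph{without} factoring out the phase, so its left-hand sides $\pd{}{t}(Gh^3\cos 3\theta)$ and $\pd{}{t}(Gh^3\sin 3\theta)$ couple all three time derivatives; it is then forced to confront an apparently overdetermined system (four real equations, counting the constraint $h' = G\cos 3\theta$, for three functions), proves the real part is redundant --- it is, modulo the constraint, the $r$-derivative of the $\tfrac{\omega^2}{2}$ equation --- and finally solves a $2\times 2$ rotation-matrix system built from the imaginary part together with the $t$-derivative of the constraint, $(\cos 3\theta)\pd{G}{t} - (3G\sin 3\theta)\pd{\theta}{t} = \bigl(\pd{h}{t}\bigr)'$. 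Your trick of dividing by $e^{3i\theta}$ first diagonalizes the left side from the outset: the real part contains only $\theta_t$ and the imaginary part only $(Gh^3)_t$, so you never need to differentiate the constraint in time, never need the linear system, and never need the redundancy discussion. What the paper's longer route buys in exchange is precisely that consistency check: by exhibiting the redundancy it verifies that the coclosedness constraint $h' = G\cos 3\theta$ propagates under the derived evolution, i.e.\ that the four-equation system is compatible. Your derivation uses the constraint at every time $t$ as a free algebraic identity, which is legitimate only because the paper's standing assumption (Section~\ref{coflowsec}) that the coflow preserves $d\ps = 0$ guarantees $\tau_1 = 0$ for all $t$; you should cite that assumption explicitly where you first invoke~\eqref{assumeeq}, since otherwise the persistence of the constraint is exactly the point your streamlined argument silently skips.
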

\begin{proof}
Differentiating~\eqref{pstempeq} with respect to $t$ and using Lemma~\ref{symmetrylaplacianlemma}, we can compute $\pd{\ps}{t} = - \Delta_d \ps$ and equate the coefficients of $dr \wedge \Omega$, $dr \wedge \bar \Omega$, and $\frac{\omega^2}{2}$. We find that
\begin{equation} \label{NKcoflowtempeqa}
\frac{\partial}{\partial t} \left( \frac{i G F^3}{2} \right) \, = \, \left( \frac{i(F^3)'}{2G} - \frac{3i}{2} h^2 \right)' + 6 G h \sin 3 \theta \quad \text{ and } \quad \frac{\partial}{\partial t} (- h^4) \, = \, -\frac{4}{G} (h^3 \cos 3 \theta)' + 12 h^2.
\end{equation}
The first equation is a complex equation, and can be simplified to
\begin{equation} \label{NKcoflowtempeq}
\frac{\partial}{\partial t} \left( G F^3 \right) \, = \, \left( \frac{(F^3)'}{G} - 3 h^2 \right)' - 12 i G h \sin 3 \theta.
\end{equation}
The second equation is a real equation and can be simplified to
\begin{equation} \label{NKcoflowtempeq2}
\frac{\partial h}{\partial t} \, = \, \frac{(h^3 \cos 3 \theta)'}{G h^3} - \frac{3}{h}.
\end{equation}
Recall, however, that we \emph{also} have the $\tau_1 = 0$ condition from~\eqref{assumeeq} that says
\begin{equation} \label{NKcoflowtempeq3}
h' \, = \, G \cos 3 \theta.
\end{equation}
Now at first glance it would appear that this system is overdetermined, because we have four equations for three functions $G$, $h$, and $\theta$. However, we will now see that there is indeed some redundancy. The real part of~\eqref{NKcoflowtempeq} is
\begin{equation*}
\frac{\partial}{\partial t} \left( G h^3 \cos 3 \theta \right) \, = \, \left[ \frac{(h^3 \cos 3 \theta)'}{G} - 3 h^2 \right]'.
\end{equation*}
If we substitute~\eqref{NKcoflowtempeq3} into the left hand side of the above expression, we obtain
\begin{equation*}
\frac{\partial}{\partial t} \left( h^3 h' \right) \, = \, \frac{\partial}{\partial t} \left( \frac{h^4}{4} \right)' \, = \, \left[ \frac{(h^3 \cos 3 \theta)'}{G} - 3 h^2 \right]'
\end{equation*}
which, up to a factor of $(-4)$, is exactly the derivative with respect to $r$ of the second equation in~\eqref{NKcoflowtempeqa} which led to~\eqref{NKcoflowtempeq2}. Thus, the independent equations are~\eqref{NKcoflowtempeq2} and~\eqref{NKcoflowtempeq3} and the imaginary part of~\eqref{NKcoflowtempeq}:
\begin{equation} \label{NKcoflowtempeq4}
\frac{\partial}{\partial t} \left( G h^3 \sin 3 \theta \right) \, = \, \left( \frac{(h^3 \sin 3 \theta)'}{G} \right)' - 12 Gh \sin 3 \theta.
\end{equation}
We need to extract invariant expressions for the time derivatives of $G$, $h$, and $\theta$ from the above equations. We begin by substituting~\eqref{NKcoflowtempeq3} into~\eqref{NKcoflowtempeq2} to eliminate $\cos 3 \theta$:
\begin{align} \nonumber
 \frac{\partial h}{\partial t} & = \, \frac{1}{G h^3} \left(\frac{h^3 h'}{G}\right)' - \frac{3}{h} \, = \, \frac{1}{Gh^3} \left[ \frac{3 h^2 (h')^2}{G} + \frac{h^3h''}{G} - \frac{h^3 h' G'}{G^2} \right] - \frac{3}{h} \\  \label{NKcoflowtempheq} \frac{\partial h}{\partial t} & = \, \frac{h''}{G^2} + \frac{3 (h')^2}{h G^2} - \frac{h' G'}{G^3} - \frac{3}{h}.
 \end{align}
The above form of $\frac{\partial h}{\partial t}$ will be useful later. We can further simplify it as
 \begin{equation*}
\frac{\partial h}{\partial t}  \, = \, \left( \frac{h''}{G^2} + \frac{6 (h')^2}{h G^2} - \frac{h' G'}{G^3}\right) - \frac{3}{h} - \frac{3 (h')^2}{h G^2} \, = \, \Delta h - \frac{3}{h}\left(1 +{ |\nabla h|}^2 \right),
\end{equation*}
where we have used~\eqref{roughlaplacianeq} and~\eqref{gradienteq}. This proves the first part of~\eqref{NKcofloweqs1}. We will need to work a bit harder to get the evolution equations for $\theta$ and $G$. Let $S$ denote the right hand side of~\eqref{NKcoflowtempeq4}:
\begin{equation} \label{Seq}
S \, = \, \left( \frac{(h^3 \sin 3 \theta)'}{G} \right)' - 12 Gh \sin 3 \theta.
\end{equation}
Expanding the left hand side of~\eqref{NKcoflowtempeq4} and rearranging, we find
\begin{equation} \label{NKcoflowtempeq5}
(h^3 \sin 3 \theta) \pd{G}{t} + (3 G h^3 \cos 3 \theta) \pd{\theta}{t} \, = \, S - (3 G h^2 \sin 3 \theta) \pd{h}{t}.
\end{equation}
This equation is linear in $\pd{G}{t}$ and $\pd{\theta}{t}$. We can get another one by differentiating~\eqref{NKcoflowtempeq3} with respect to $t$:
\begin{equation*}
(\cos 3 \theta) \pd{G}{t} - (3 G \sin 3 \theta) \pd{\theta}{t} \, = \, \pd{}{t} h' \, = \, \left( \pd{h}{t} \right)'.
\end{equation*}
Dividing~\eqref{NKcoflowtempeq5} by $h^3$, we now have the following system of linear equations:
\begin{equation*}
\begin{pmatrix} \cos 3 \theta & \sin 3 \theta \\ & \\ - \sin 3 \theta & \cos 3 \theta \end{pmatrix} \begin{pmatrix} 3 G \pd{\theta}{t} \\ \\ \pd{G}{t} \end{pmatrix} \, = \, \begin{pmatrix} \frac{S}{h^3} - \frac{3G \sin 3 \theta}{h} \pd{h}{t} \\ \\ \left( \pd{h}{t} \right)' \end{pmatrix}.
\end{equation*}
This system is easily solved to yield
\begin{equation*}
\begin{aligned}
3 G \pd{\theta}{t} & = \, (\cos 3 \theta) \left( \frac{S}{h^3} - \frac{3G \sin 3\theta}{h} \pd{h}{t} \right) - \sin 3 \theta \left( \pd{h}{t} \right)', \\
\pd{G}{t} & = \, (\sin 3 \theta) \left( \frac{S}{h^3} - \frac{3G \sin 3\theta}{h} \pd{h}{t} \right) + \cos 3 \theta   \left( \pd{h}{t} \right)'.
\end{aligned}
\end{equation*}
We can now substitute the expression~\eqref{Seq} for $S$, the expression~\eqref{NKcoflowtempheq} for $\pd{h}{t}$, and the derivative with respect to $r$ of~\eqref{NKcoflowtempheq} for $(\pd{h}{t})'$ into the above equations. We also repeatedly use~\eqref{NKcoflowtempeq3} to eliminate all terms involving $h'$ at every stage. After much computation, the  result is:
\begin{align} \label{NKcoflowtempeq6}
\pd{G}{t} & = \, - \frac{3G \sin^2 3 \theta}{h^2} - \frac{9 (\theta')^2}{G}, \\ \label{NKcoflowtempeq7}
\pd{\theta}{t} & = \, \frac{\theta''}{G^2} + \frac{6 \theta' \cos 3 \theta}{hG} - \frac{\theta' G'}{G^3} - \frac{2 \sin 3 \theta \cos 3 \theta}{h^2}.
\end{align}
Now~\eqref{gradienteq} shows that~\eqref{NKcoflowtempeq6} becomes
\begin{equation*}
\pd{G}{t} \, = \, - \left(9 \, {|\nabla \theta|}^2 + 3 \, {\left| \frac{\sin 3 \theta}{h} \right|}^2 \right) G,
\end{equation*}
which is the third part of~\eqref{NKcofloweqs1}. Finally, substituting $\cos 3 \theta = \frac{h'}{G}$ in~\eqref{NKcoflowtempeq7} and using~\eqref{roughlaplacianeq} gives
\begin{equation*}
\pd{\theta}{t} \, = \, \Delta \theta - \frac{\sin 6 \theta}{h^2},
\end{equation*}
which is the second part of~\eqref{NKcofloweqs1}.
\end{proof}

As discussed at the end of Section~\ref{CYsolitonsec}, long-time existence for these evolution equations would be difficult to determine, and in general one should expect singularity formation in finite time. 

\subsection{The NK soliton equations} \label{NKsolitonsec}

Now we turn to the \emph{soliton} equations in the nearly K\"ahler case. As before, we can without loss of generality reparametrize the local coordinate $r$ so that $G = 1$. Also as in the Calabi-Yau case, we can assume that $X = \nabla k = k' \frac{\partial}{\partial r}$ is a gradient vector field for some function $k = k(r)$ on $L^1$. We recall again that the soliton equation, as derived in~\eqref{coflowgradientsolitoneq}, is
\begin{equation} \label{solitonNKtempeq}
- \Delta_d \ps = \LieDer_{\nabla k} \ps + \lambda \ps = d({\nabla k} \hook \ps) + \lambda \ps
\end{equation}
since $d\ps = 0$.
\begin{theorem} \label{NKsolitonthm}
The coclosed $\G$-structures which satisfy the soliton equation~\eqref{solitonNKtempeq} when $N^6$ is nearly K\"ahler are given by~\eqref{phpseqs} where $G=1$ and the functions $h$, $\theta$, and $k'$  satisfy
\begin{align} \label{NKsolitoneq1}
h' & = \, \cos 3 \theta, \\ \label{NKsolitoneq2} 0 & = \, (h^3 \sin 3 \theta)'' - 12h \sin 3 \theta - \lambda h^3 \sin 3 \theta - (k' h^3 \sin 3 \theta)', \\ \label{NKsolitoneq3} 0 & = \, (h^3 \cos 3 \theta)' -3 h^2 - \frac{\lambda}{4} h^4 - k' h^3 \cos 3 \theta.
\end{align}
\end{theorem}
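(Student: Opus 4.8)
The plan is to mimic the strategy used for the Calabi--Yau soliton equations in Theorem~\ref{CYsolitonthm}: normalize $G=1$ by reparametrizing $r$, write the only symmetric gradient vector field as $X=\nabla k = k'\frac{\partial}{\partial r}$, expand both sides of the soliton equation~\eqref{solitonNKtempeq} in the basis $\{dr\wedge\Omega,\,dr\wedge\bar\Omega,\,\frac{\omega^2}{2}\}$, and equate coefficients. The coclosedness hypothesis feeds in the constraint $\tau_1=0$, which by Assumption~\ref{tauonevanishassume} with $G=1$ is exactly~\eqref{NKsolitoneq1}, namely $h'=\cos 3\theta$.

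First I would compute the contraction. Since $\Omega$, $\bar\Omega$ and $\omega$ are pulled back from $N^6$, we have $\frac{\partial}{\partial r}\hook\Omega = 0$ and $\frac{\partial}{\partial r}\hook\frac{\omega^2}{2}=0$, so from~\eqref{pstempeq} with $G=1$ only the $dr$-terms survive and $\nabla k \hook \ps = k'\bigl(\frac{iF^3}{2}\Omega - \frac{i\bar F^3}{2}\bar\Omega\bigr)$. Applying $d$ and using the nearly K\"ahler relations~\eqref{NKequationscomplex} (that is, $d\Omega = 4i\frac{\omega^2}{2}$ and $d\bar\Omega = -4i\frac{\omega^2}{2}$) produces a $dr\wedge\Omega$ term with coefficient $\frac{i}{2}(k'F^3)'$, its conjugate on $dr\wedge\bar\Omega$, and a $\frac{\omega^2}{2}$ term whose coefficient collapses to $-2k'(F^3+\bar F^3)=-4k'h^3\cos 3\theta$. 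Combining this with $\lambda\ps$ from~\eqref{pstempeq} and with $-\Delta_d\ps$ supplied by Lemma~\ref{symmetrylaplacianlemma} (setting $G=1$) gives three scalar equations.

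Equating the $\frac{\omega^2}{2}$ coefficients yields, after dividing by $-4$, precisely~\eqref{NKsolitoneq3}. The $dr\wedge\Omega$ coefficient is a single complex equation; multiplying through by $-2i$ turns it into $(F^3)'' - 3(h^2)' - 12ih\sin 3\theta = (k'F^3)' + \lambda F^3$. Writing $F^3 = h^3\cos 3\theta + i\,h^3\sin 3\theta$ and taking the imaginary part (the term $-12ih\sin 3\theta$ being purely imaginary, the term $-3(h^2)'$ purely real) reproduces exactly~\eqref{NKsolitoneq2}. The $dr\wedge\bar\Omega$ equation is merely the complex conjugate and yields nothing new.

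The one point that needs care, and which I expect to be the main bookkeeping obstacle, is the apparent over-determination: the \emph{real} part of the $dr\wedge\Omega$ equation seems to add a fourth condition, $(h^3\cos 3\theta)'' - 3(h^2)' - \lambda h^3\cos 3\theta - (k'h^3\cos 3\theta)' = 0$. I would dispose of it exactly as in the evolution case (Theorem~\ref{NKcoflowthm}): differentiate~\eqref{NKsolitoneq3} with respect to $r$ and substitute $(h^4)' = 4h^3 h' = 4h^3\cos 3\theta$, using the constraint~\eqref{NKsolitoneq1}. This shows the $r$-derivative of~\eqref{NKsolitoneq3} is identical to the real part of the $dr\wedge\Omega$ equation, so that the real part is redundant. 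What remains is precisely the three stated equations~\eqref{NKsolitoneq1}--\eqref{NKsolitoneq3}, which completes the argument.
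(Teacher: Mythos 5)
Your proposal is correct and follows essentially the same route as the paper's own proof: contract $\ps$ with $k'\frac{\partial}{\partial r}$, differentiate using~\eqref{NKequationscomplex}, compare coefficients of $dr\wedge\Omega$, $dr\wedge\bar\Omega$, $\frac{\omega^2}{2}$ against~\eqref{pstempeq} and Lemma~\ref{symmetrylaplacianlemma}, and split the complex equation into real and imaginary parts. Your handling of the apparent over-determination (differentiating~\eqref{NKsolitoneq3} and using $h'=\cos 3\theta$ to recover the real part of the $dr\wedge\Omega$ equation) is exactly the redundancy argument the paper invokes, just spelled out in more detail.
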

\begin{proof}
As in the proof of Theorem~\ref{CYsolitonthm}, but this time using~\eqref{NKequationscomplex}, we find that
\begin{align*}
d({\nabla k} \hook \ps) & = \, d \left( k' \frac{\partial}{\partial r} \hook \ps \right) \, = \, d \left( \frac{i F^3 k'}{2}  \Omega - \frac{i \bar F^3 k'}{2} \bar \Omega \right) \\ & = \, \frac{i}{2} (F^3 k')' dr \wedge \Omega - \frac{i}{2} (\bar F^3 k')' dr \wedge \bar \Omega + \frac{i F^3 k'}{2} d\Omega - \frac{i \bar F^3 k'}{2} d\bar \Omega \\ & = \, \frac{i}{2} (F^3 k')' dr \wedge \Omega - \frac{i}{2} (\bar F^3 k')' dr \wedge \bar \Omega -2 (F^3 + \bar F^3) k' \frac{\omega^2}{2}.
\end{align*}
We substitute the above expression into~\eqref{solitonNKtempeq} and use $G=1$ and equations~\eqref{pstempeq} and~\eqref{symmetrylaplacianeq}. When we compare coefficients, we find that
\begin{equation*}
 \left( \frac{i(F^3)'}{2} - \frac{3i}{2} h^2 \right)' + 6 h \sin 3 \theta\, = \, \lambda \frac{i F^3}{2} + \frac{i}{2} (F^3 k')', \qquad \qquad -4 (h^3 \cos 3 \theta)' + 12 h^2 \, = \, - \lambda h^4 - 2(F^3 + \bar F^3) k'.
\end{equation*}
Taking real and imaginary parts of the first equation, and simplifying all three equations, we get
\begin{align} \label{NKsolitontempeq1}
\left( (h^3 \cos 3 \theta) - 3 h^2 \right)' - \lambda h^3 \cos 3 \theta - (k' h^3 \cos 3 \theta)' \, = \, 0, \\ \label{NKsolitontempeq2} (h^3 \sin 3 \theta)'' - 12h \sin 3 \theta - \lambda h^3 \sin 3 \theta - (k' h^3 \sin 3 \theta)' \, = \, 0, \\ \label{NKsolitontempeq3} (h^3 \cos 3 \theta)' -3 h^2 - \frac{\lambda}{4} h^4 - k' h^3 \cos 3 \theta \, = \, 0.
\end{align}
As in Theorem~\ref{NKcoflowthm}, this appears to be overdetermined because we also have the $\tau_1 = 0$ assumption~\eqref{assumeeq} which is now $\cos 3 \theta = h'$, but it is easy to see that with this condition, equation~\eqref{NKsolitontempeq1} is a consequence of equation~\eqref{NKsolitontempeq3}. This completes the proof.
\end{proof}

We now attempt to solve the system of equations in Theorem~\ref{NKsolitonthm}. It is easy to spot some particular solutions. For example, if we assume $3\theta = 0$, then~\eqref{NKsolitoneq2} is trivially satisfied and~\eqref{NKsolitoneq1} implies that $h = r + b$ for some constant $b$. Then~\eqref{NKsolitoneq3} becomes $\lambda (r + b) + 4k' = 0$, which shows that we can find a $k'$ for any choice of $\lambda$. Thus one family of solutions is:
\begin{equation*}
3\theta = 0, \qquad h = r + b, \qquad k' = - \frac{\lambda}{4} (r + b), \qquad \lambda, b \in \R{}.
\end{equation*}
Similarly, if we assume $3\theta = \pi$, then we find the following family of solutions:
\begin{equation*}
3\theta = \pi, \qquad h = -r + b, \qquad k' = \frac{\lambda}{4} (-r + b), \qquad \lambda, b \in \R{}.
\end{equation*}
Since we must have $h >0$ always, we see that the above two families of solutions are only defined on some proper subinterval of $L^1 = \R{1}$. In particular, these families include the case of the Riemannian cone over $N^6$, given by $h(r) = r$ with $L^1 = (0, \infty)$. The $\G$-structure $\ph$ is torsion-free in this case and $M^7$ has $\G$ holonomy. This example is entirely analogous to the exhibition of the standard Euclidean metric on $\R{n}$ as a non-trivial gradient Ricci soliton.

Another family of special solutions can be found if we assume $3\theta = \pm \frac{\pi}{2}$. In this case~\eqref{NKsolitoneq1} implies that $h = b$ for some constant $b > 0$, and then~\eqref{NKsolitoneq3} forces $\lambda = -\frac{12}{b^2}$ and~\eqref{NKsolitoneq2} then gives $k'' = 0$. Thus another family of solutions is:
\begin{equation*}
\theta = \frac{\pi}{2}, \qquad h = b, \qquad k' = c, \qquad \lambda = - \frac{12}{b^2}, \qquad b > 0, c \in \R{}.
\end{equation*}
Notice that this family of solutions are all \emph{shrinkers}. In this case the metric~\eqref{symmetrymetriceq} on $M^7$ is a Riemannian product. 

Finally, we can find a more interesting solution by trying $h(r) = \sin(r)$. The motivation for such an ansatz is that ``sine-cone'' metrics $g_M = dr^2 + \sin^2(r) g_N$ arise often in the study of Einstein manifolds (see for example~\cite{Boyer:Galicki:2008} or~\cite{Fernandez:et:al:2008}) and the fact that $h' = \cos(3 \theta)$. One can check that this ansatz does indeed work and we obtain the following solution:
\begin{equation*}
3\theta =r, \qquad h = \sin(r), \qquad k' = 0, \qquad \lambda = -16.
\end{equation*}
This is another shrinking soliton. In this case, $L^1 = (0, \pi)$ and the manifold $M^7 = (0, \pi) \times N^6$ can be compactified to a compact topological space with two ``conical singularities.'' One can also check (for example using the formulae on page 192 of~\cite{Sternberg}) that in this case, the metric $g_M$ on $M$ is Einstein. This $\G$-structure is \emph{not} torsion-free, but by equation~\eqref{solitonNKtempeq} the $3$-form $\ph$ is an eigenform (with eigenvalue $16$) of its induced Hodge Laplacian $\Delta_d$.

In the general case, we can reduce the equations of Theorem~\ref{NKsolitonthm} to a single \emph{third order} nonlinear ordinary differential equation for $h$ as follows. Let us assume that $h' = \cos 3 \theta$ is never zero. We know that $h = r + b$ and $\theta = 0$ is a solution with this property, so we are looking for other solutions close to this one. First, we substitute~\eqref{NKsolitoneq1} into~\eqref{NKsolitoneq3} to obtain
\begin{align*}
0 & = \, (h^3 h')' - 3 h^2 - \frac{\lambda}{4} h^4 - k' h^3 h' \\ & = \, 3 h^2 (h')^2 + h^3 h'' - 3 h^2 - \frac{\lambda}{4} h^4 - h^3 h' k'.
\end{align*}
We can solve the above expression for $h^3 k'$ as:
\begin{equation} \label{generalsolitontempeq1}
h^3 k' \, = \, 3 h^2 h' + \frac{h^3 h''}{h'} - \frac{3 h^2}{h'} - \frac{\lambda h^4}{4 h'}.
\end{equation}
We will also need the derivative of the above expression:
\begin{align} \nonumber
(h^3 k')' & = \, (6 h (h')^2 + 3 h^2 h'') + \left( 3 h^2 h'' + \frac{h^3 h'''}{h'} - \frac{h^3 (h'')^2}{(h')^2} \right) + \left( - 6 h + \frac{3 h^2 h''}{(h')^2} \right) + \left( -\lambda h^3 + \frac{\lambda h^4 h''}{4 (h')^2} \right) \\ \label{generalsolitontempeq2} & = \, 6 h (h')^2 + 6 h^2 h'' - 6 h - \lambda h^3 + \frac{h^3 h'''}{h'} + \frac{3 h^2 h''}{(h')^2} + \frac{\lambda h^4 h''}{4 (h')^2} - \frac{h^3 (h'')^2}{(h')^2}.
\end{align}
Let us write $u = \sin 3 \theta$ to simplify notation. Then equation~\eqref{NKsolitoneq2} is
\begin{align} 
0 & = \, (h^3 u)'' - 12h u - \lambda h^3 u - (h^3 k' u)' \\ \label{generalsolitontempeq3} & = \, 6 h (h')^2 u + 3 h^2 h'' u + 6 h^2 h' u' + h^3 u'' - 12h u - \lambda h^3 u - (h^3 k')' u - (h^3 k') u'
\end{align}
We can substitute~\eqref{generalsolitontempeq1} and~\eqref{generalsolitontempeq2} into~\eqref{generalsolitontempeq3} to completely eliminate $k'$. After some simplification, the end result is
\begin{equation} \label{generalsolitontempeq4}
\begin{aligned} 0 \, = & \, u'' (h^3) + u' \left( 3 h^2 h' - \frac{h^3 h''}{h'} + \frac{3 h^2}{h'} + \frac{\lambda h^4}{4 h'} \right) \\ & \qquad {} + u \left( - 3 h^2 h'' - 6 h - \frac{h^3 h'''}{h'} - \frac{3 h^2 h''}{(h')^2} - \frac{\lambda h^4 h''}{4 (h')^2} + \frac{h^3 (h'')^2}{(h')^2} \right).
\end{aligned}
\end{equation}
The next step is to eliminate $u = \sin 3 \theta$ from the above equation. Since $h' = \cos 3 \theta$, we have
\begin{equation} \label{ueq1}
u^2 \, = \, 1 - (h')^2.
\end{equation}
We can differentiate the above equation to get
\begin{equation} \label{ueq2}
u u' \, = \, - h' h''.
\end{equation}
Now we differentiate~\eqref{ueq2}, multiply both sides by $u^2$, and use both~\eqref{ueq1} and~\eqref{ueq2} again:
\begin{align*}
(u')^2 + u u'' & = \, - ((h'')^2 + h' h''') \\ u^2 ( (u')^2 + u u'' ) & = \, -u^2 ( (h'')^2 + h' h''' ) \\
(u u')^2 + u^3 u'' & = \, - (1 - (h')^2) ( (h'')^2 + h' h''' ) \\ (- h' h'')^2 + u^3 u'' & = \, - (h'')^2 - h' h''' + (h')^2 (h'')^2 + (h')^3 h'''.
\end{align*}
From the above we find
\begin{equation} \label{ueq3}
u^3 u'' \, = \, (h')^3 h''' - h' h''' - (h'')^2.
\end{equation}
We can now multiply equation~\eqref{generalsolitontempeq4} by $u^3$ and substitute~\eqref{ueq1},~\eqref{ueq2}, and~\eqref{ueq3} for $u^4 = (u^2)^2$, $u^3 u' = u^2 (u u')$, and $u^3 u'' = u^2 (u u'')$. We can then multiply through by $(h')^2$ to clear the denominators. This eliminates $u$ completely and leaves only a third order nonlinear (polynomial) ordinary differential equation for $h$. The result is:
\begin{equation} \label{generalsolitontempeq5}
\begin{aligned}
& h^3 (h')^3 h''' - h^3 h' h''' - 2 h^3 (h')^2 (h'')^2 + 3 h^2 (h')^4 h'' - 6 h (h')^2 + h^3 (h'')^2 - 3 h^2 h'' \\ & \qquad {} + 12 h (h')^4 - 6 h (h')^6 + \frac{\lambda}{4} h^4 (h')^2 h'' - \frac{\lambda}{4} h^4 h'' \, = \, 0.
\end{aligned}
\end{equation}
If one can solve this equation, then we also get the solution \emph{algebraically} for $u = \sin 3 \theta$ from~\eqref{ueq1} and for $k'$ from~\eqref{generalsolitontempeq1}. However, there does not appear to be an integrating factor for this differential equation and hence it is not clear if the general solution can be found explicitly, as is often (but not always) the case with cohomogeneity one solitons for geometric flows. See~\cite{Dancer/Wang:2009} for examples of cohomogeneity one Ricci solitons which were not exactly integrable, but where a dynamical systems analysis was possible.

\bibliographystyle{amsplain}
\bibliography{karigiannis-mckay-tsui}
\end{document}